\documentclass[10pt]{article}
%\linespread{1.6}

\usepackage{amsmath, amsthm, amsfonts, amssymb}
\usepackage[usenames,dvipsnames,svgnames,table]{xcolor}%e.g. {\color{red} ************* $****$}

\setlength{\topmargin}{0cm} \textheight 215 true mm \textwidth  160
true mm \oddsidemargin 2 true mm \evensidemargin 2  true mm

\newtheorem{theorem}{Theorem}[section]
\newtheorem{corollary}[theorem]{Corollary}

\newtheorem{proposition}[theorem]{Proposition}

\newtheorem{remark}[theorem]{Remark}

\numberwithin{equation}{section}

\title{\bf Gradient bounds  and Liouville property for a class of hypoelliptic diffusion via coupling\thanks{Sponsored by the NSF of China (No. 11671076)}}
\author{Bin Qian\thanks{Department of Mathematics, Changshu Institute of Technology, Changshu, Jiangsu 215500.
 E-mail: binqiancn@126.com, binqiancn@gmail.com}\ \ \ \ Beibei Zhang\thanks{Department of Mathematics, Changshu Institute of Technology, Changshu, Jiangsu 215500. E-mail: zhangbb@whu.edu.cn}
 }\date{}

%option\to configations click  Miktex%

%%%%%%%%%%%%%%%%%%%%%%%%%%%%%%%%%%%%%%%%%%%%%%%%%%%%%%%%%%%%%%%%%%%%%%%%%%%%
%%%%%%%%%%%%%%%%%%%%%%%%%%%%%%DEBUT%%%%%%%%%%%%%%%%%%%%%%%%%%%%%%%%%%%%%%%%%
%%%%%%%%%%%%%%%%%%%%%%%%%%%%%%%%%%%%%%%%%%%%%%%%%%%%%%%%%%%%%%%%%%%%%%%%%%%%

\begin{document}
\maketitle
\begin{abstract}
In this paper, we obtain the reverse Bakry-\'Emery type estimates for a  class of hypoelliptic diffusion operator by coupling method. The (right and reverse) Poincar\'e inequalities and the (right and reverse) logarithmic Sobolev inequalities are presented as  consequences of such estimates. Wang-Harnack inequality,  Hamilton's gradient estimate and Liouville property are also presented by reverse logarithmic Sobolev inequality.

\end{abstract}
\vskip 10pt\noindent {\bf Key Words:} Coupling; Gradient bounds; Liouville property; Hypoelliptic diffusion.
\vskip 10pt\noindent {\bf AMS 2020 Subject classification :} 60J60;  35H10

\section{Introduction}
We consider the second order differential operator (Kolmogorov type operator)  \begin{equation}\label{op1}
\mathcal{L}=\frac12div(AD)+\langle x,BD\rangle,  x\in \mathbb{R}^{N}
\end{equation}
where $D=\left(\partial_{x_1},\cdots,\partial_{x_N}\right)$, $\mbox{div}$ and $\langle\cdot,\cdot\rangle$ denote, respectively, the gradient, the divergence and the inner product in $\mathbb{R}^N$.  The matrix $A=(a_{ij})$ and $B=(b_{ij})$ are $N\times N$ real constant matrices in the form
\begin{equation}\label{ABM}
A=\left(\begin{array}{cc}
A_0&0\\
0&0
\end{array}\right) \ \ \mbox{and}\ \
B=\left(
\begin{array}{ccccc}
0&B_1&0&\cdots&0\\
0&0&B_2&\cdots&0\\
\vdots&\vdots&\vdots&\ddots&\vdots\\
0&0&0&\cdots&B_r\\
0&0&0&\cdots&0
\end{array}
\right)
\end{equation}
 where  $A_0$ is  a  $m_0\times m_0$ symmetric and positive semidefinite and  $B_k$ is a $m_{k-1}\times m_k$ matrix of rank $m_k, k=1,2,\ldots,r$, with
$$
m_0\ge m_1\ge \cdots\ge m_r\ge1\ \ \ \mbox{and}\ \ \ \sum_{k=0}^rm_k=N.
$$

\eqref{op1} arises in the description of wide classes of stochastic processes and kinetic models and in mathematical finance, see \cite{LPP02} etc.
By Proposition 2.1 and Proposition 2.2 of \cite{LP94}. Hypothesis \eqref{ABM} implies that the operator $L$ verifies the classical H\"ormander condition \cite{H67}. Define, for every $t\in \mathbb{R}$, $E(t)=\exp(-tB^*)$ ($B^*$ is the transpose matrix of $B$) and
\begin{equation}\label{C-M}
\ \mathcal{C}(t):=\int_0^t E(s)AE^*(s)ds
\end{equation}
Under the assumption \eqref{ABM}, we have $\mathcal{C}(t)>0$ for every $t>0$, it means that $\mathcal{C}(t)>0$ is positive definite, see \cite{LP94}, \cite{LPP02} etc.  By H\"ormander's  theorem \cite{H67}, the operator $\mathcal{L}$ defined in \eqref{op1} is hypoelliptic and generators a Markov process $\textbf{X}_t$. The transition probability density of the processs $\textbf{X}_t$ is Gaussian with respect to the Lebesgue measure $dx$, see e.g {\cite[Page 148]{H67}},  {\cite[Theorem 1 and Theorem 4]{K72}}  or {\cite[Page 30]{LP94}} for explicit expression of the transition probability:
\begin{equation*}\label{fs}
h(x,t;\xi,\tau)=p(x-E(t-\tau)\xi,t-\tau)
\end{equation*}
$0\le \tau<t$, where $p(x,t;0,0)=0$ if $t\le0$ and
$$
p(x,t)=\frac{(2\pi)^{-\frac{N}{2}}}{\sqrt{\det \mathcal{C}(t)}}\exp\left(-\frac{1}{2}\langle \mathcal{C}^{-1}(t)x,x\rangle\right), \ t>0.
$$

We can express the Markov process $\textbf{X}_t=(X_1(t),\cdots, X_{r+1}(t))$ by the following stochastic differential equation
\begin{equation}\label{markov}\begin{cases}
dX_1(t)&=\sigma dB(t),\\
dX_{2}(t)&=B_1^*X_1(t)dt\\
&\vdots\\
dX_{r+1}(t)&=B_r^*X_r(t)dt
\end{cases}\end{equation}
where $\sigma$ is a $m_0\times m_0$ constant matrix satisfying $\sigma\sigma^*=A_0$, for $1\le i\le r$,  $B(t)$ is a Brownian motion in  $\mathbb{R}^{m_0}$ and $X_{i+1}(t)$ is a $\mathbb{R}^{m_i}-$valued process ($1\le i\le r$).

In particular, $r=1$ and $A_0=B_1=1$, \eqref{op1} reduces to the Kolmogorov operator initially introduced by A. N. Kolmogorov in \cite{Ko34},  which is the simplest example of a hypoelliptic seconde order differential operator, see \cite{H67}. In the case of $r=1$ and $A_0=B_1=id$, Baudoin et al \cite{BGM20} and the very recent paper \cite{BGHKM23} obtained gradient bounds and other functional inequalities including  reverse Poincar\'e inequality, reverse logarithmic Sobolev inequality by generalized $\Gamma-$calculus, see \cite{BE85,BGL14} for Bakry-\'Emery  $\Gamma-$calculus in Riemannian manifolds. The authors \cite{QZ1} generalized the results to the iterated Kolmogorov diffusion operator ($A_0=B_1=\cdots=B_r=id \ (r\ge1)$) by a similar  modified $\Gamma-$calculus. The main observation of \cite{QZ1} is the reverse Bakry-\'Emery type inequalities,  which generates the reverse Poincar\'e inequality and reverse logarithmic Sobolev inequality. For non-Gaussian degenerate  diffusions, Baudoin et el. \cite{BGM23} studied Wang-Harnack inequality (by reverse logarithmic Sobolev inequality) and the quasi-invariance property  for infinite dimensional Kolmogorov diffusions. See \cite{BBBC,BG17,BB12,GW12,Wa14,Ba17} etc. for functional inequalities and related topics for the hypoelliptic diffusion operators.

 On the other side, the Li-Yau gradient estimates for the positive solutions of \begin{equation}\label{heat}\mathcal{L}u=\partial_tu\end{equation} has been proved by Pascucci and Polidoro \cite{PP04} by the original variational argument due to Li and Yau \cite{LY86}, see also \cite{GL90}, \cite{LP94} and references therein. The Li-Yau type gradient estimate can give the parabolic Harnack inequality, the latter can compare the solution (heat) of \eqref{heat} of increasing time at two different  points. R. S. Hamilton \cite{Ham11} obtained the matrix differential Harnack estimate for hypoelliptic heat equation \eqref{heat} in the case of $r=1,A_0=B_1=1$ by using Riccati type equations. Huang \cite{HH14} extended Hamilton's result to slightly general model and conjectured that the matrix differential Harnack estimate holds for hypoelliptic operator \eqref{op1} with structure \eqref{ABM}.  This conjecture has been solved by the authors in \cite{QZ2}.  Meanwhile, there is a different gradient estimate,  discovered by R. S. Hamilton in \cite{Ham93}, for bounded positive solutions of $Lu=\partial_tu$ on compact Riemannian manifolds under  the lower bounded of the Ricci curvature. This type of gradient estimate (called Hamilton's elliptic gradient estimate) can give the Harnack inequality with power (see  for example  Corollary \ref{harn-ham3}), which can compare the heat of the same time at two different points.  One may ask whether Hamilton's elliptic gradient estimate hold for the hypoelliptic heat equation \eqref{heat}. Since the Ricci curvature (or Bakry-\'Emery $\Gamma_2$ curvature) of hypoelliptic operator like $\mathcal{L}$ is $-\infty$ every where,  the original method in \cite{Ham93} doesn't work. It is the start point  of this paper  to derive the Hamilton's elliptic gradient estimate for the hypoelliptic heat equation \eqref{heat}.

To this end, we intend to study the gradient bounds and funtional inequalities for the  hypoelliptic diffusion  operator \eqref{markov}. Since the lower bound of  the associated Bakry \'Emery $\Gamma_2$ curvature of $\mathcal{L}$ with structure \eqref{ABM} is unclear (see the computations of Kolmogorov diffusion operator and iterated Kolmogorov diffusion operator in \cite{BGM20,QZ1}), new method is needed. For special choices of $A$ and $B_1$, synchronous coupling was introduced in \cite[Proposition 2.10]{BGM20} to prove  the $L^q$ version of the right Bakry-\'Emery type estimate. It is said that it seems difficult to prove the reverse Poincar\'e and the reverse logarithmic Sobolev inequalities for the semigroup generated by Kolmogorov diffusion by using coupling techniques,  see Remark 2.11 in \cite{BGM20}.  Here we succeed in applying coupling techniques to derive the reverse Bakry-\'Emery type estimates, which imply  reverse Poincar\'e and the reverse logarithmic Sobolev inequalities, see Corollary \ref{BE2-cor}  or Corollary \ref{BE2-cor-3},  and Corollary \ref{BEln-cor-3}, which is the main contribution of this paper.

 This paper is organized as follows. To illustrate the coupling method we begin with the simple case $r=1$ in Section \ref{sec-2.1}. The Bakry-\'Emery type estimates with  free parameters are presented, see Proposition \ref{prop-BE}; Different choices of the free parameter gives right and reverse Bakry-\'Emery type estimates (Corollary \ref{BE2-cor}). Then we extend to the general case $r\ge1$ in section \ref{sec-2.2}, see Proposition \ref{prop-BE-3}, Proposition \ref{prop-BEln-3} and Corollary \ref{BE2-cor-3}, Corollary \ref{BEln-cor-3}. In section \ref{sec-3}, we give various applications of Bakry-Emery type inequalities, see Theorem \ref{PI-3} for the (right and reverse) Poincar\'e inequality, and Theorem \ref{thm-LSI-3} for the (right and reverse) logarithmic Sobolev inequality.  Wang-Harnack inequality,  Hamilton's elliptic gradient estimate and Liouville property are also presented as consequences of reverse logarithmic Sobolev inequality.

Throughout this paper, we use the following \\
{\bf Notations:} For any vector $x=(x_1,\cdots,x_d)^*,y=(y_1,\cdots,y_d)^*\in \mathbb{R}^{d}$, $q\ge 1$, we denote $\|x\|_q=\left(\sum_{i=1}^{d}|x_i|^q\right)^{\frac1q},\ \|x\|_{A,q}=\|\sigma x\|_q$ and the inner product $\langle x,y\rangle=\sum_{i=1}^dx_iy_i$. In short $\|x\|:=\|x\|_2=\sqrt{\langle x,x\rangle}$, $\|x\|_{A}:=\|x\|_{A,2}=\sqrt{x^*Ax}$.  We use the notation $\partial_jf=\partial_{x_j}f, 1\le j\le d$ for $f\in C^1\left(\mathbb{R}^d\right)$.  Let $I_i:=\sum_{k=1}^{i}m_{k-1},i=0,1,\cdots,r+1$, $\prod_{i=1}^kB_i:=B_1B_2\cdots B_k$, and by convention $\prod_{i=1}^0*=id$, $\sum_{k=1}^0*=0$. $P_t=e^{t\mathcal{L}}$ denotes the semigroup generated by the hypoelliptic diffusion operator $\mathcal{L}$, and $P_tf(x)=\int h(x,t;y,0)f(y)dy$.

For any  $f\in C^1(\mathbb{R}^{I_{r+1}})$, $x=\left(\begin{matrix}x^{(1)}\\ \cdots\\ x^{(r+1)}\end{matrix}\right)\in \mathbb{R}^{I_{r+1}}$, where $x^{(i)}=(x_{I_{i-1}+1},\cdots,x_{I_{i-1}+m_{i-1}})^*\in \mathbb{R}^{m_i}, i=1,2,\cdots,r+1$, we denote
$$\nabla^{(i)}f=\left(\partial_{{I_{i-1}+1}}f, \cdots,\partial_{ {I_{i-1}+m_{i-1}}}f\right)^* \ \mbox{and} \ \nabla^{}f=\left(\begin{matrix} &\nabla^{(1)}f\\&\cdots\\
&\nabla^{(r+1)}f\end{matrix}\right).$$

For  the second order differential operator $\mathcal{L}$ as in \eqref{op1}, we associate the {\it carr\'e du champ} $\Gamma$, which for smooth enough function $f,g$ is defined by
$$
\Gamma(f,g):=\frac12\left(\mathcal{L}(fg)-f\mathcal{L}g-g\mathcal{L}f\right), \ \Gamma(f):=\Gamma(f,f).
$$

\section{Bakry-\'Emery type inequality}\label{sec-2}

\subsection{Bakry-\'Emery type inequality: $r=1$.}\label{sec-2.1}
To illustrate the method, we begin with the case of $r=1$. Now the matrix $B$ in \eqref{ABM} has a simple form $\left(
\begin{array}{cc}
0&B_1\\
0&0
\end{array}
\right).
$   The associated Markov process ${\bf{X}}_t=(X_1(t),X_2(t))$ can be described as
 \begin{equation*}\label{markov2}\begin{cases}
dX_1(t)&=\sigma dB(t),\\
dX_{2}(t)&=B_1^*X_1(t)dt
\end{cases}\end{equation*}

 We have the following Bakry-\'Emery type estimate for the semigroup $P_tf(x)=\mathbb{E}(f({\bf X}_t))(x)$. \begin{proposition}\label{prop-BE}
Let $f\in C^2\left(\mathbb{R}^{m_0}\times\mathbb{R}^{m_1}\right)$ with bounded  second derivatives. Then for any fixed $t\ge0$, $x^{(1)}\in \mathbb{R}^{m_0}, x^{(2)}\in\mathbb{R}^{m_1}$ and any constant $\alpha_2\in \mathbb{R}$,
\begin{equation}\label{BE}
\left\|\nabla^{{(1)}}P_tf(x^{(1)},x^{(2)})+\alpha_2 B_1\nabla^{(2)}P_tf(x^{(1)},x^{(2)})\right\|_{A_0}^2\le P_t\left(\left\|\nabla^{(1)}f+(\alpha_2+t)B_1\nabla^{(2)}f\right\|_{A_0}^2\right)(x^{(1)},x^{(2)}).
\end{equation}
\end{proposition}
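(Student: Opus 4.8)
The plan is to exploit the fact that for $r=1$ the system \eqref{markov} is \emph{linear}, so the stochastic flow is affine in the starting point and the associated synchronous coupling derivative (the first variation of the flow) is deterministic. This will convert the desired inequality into an exact commutation identity between $\nabla$ and $P_t$, after which a single application of Jensen's inequality finishes the argument. Concretely, I would drive two copies of the process from nearby starting points by the same Brownian motion $B(t)$; because the drift is linear, the difference of the two trajectories is a deterministic affine function of the perturbation, and this is what makes the estimate sharp.

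First I would solve the SDE explicitly. Starting from $(x^{(1)},x^{(2)})$ one has $X_1(t)=x^{(1)}+\sigma B(t)$, and integrating the second equation gives
\begin{equation*}
X_2(t)=x^{(2)}+tB_1^{*}x^{(1)}+B_1^{*}\sigma\int_0^t B(s)\,ds .
\end{equation*}
Thus $\textbf{X}_t$ is an affine function of $(x^{(1)},x^{(2)})$ plus a centered Gaussian vector independent of the starting point, and $P_tf=\mathbb{E} f(X_1(t),X_2(t))$. Differentiating in the starting point and using $\partial X_1(t)/\partial x^{(1)}=\mathrm{Id}$, $\partial X_2(t)/\partial x^{(1)}=tB_1^{*}$, $\partial X_1(t)/\partial x^{(2)}=0$, $\partial X_2(t)/\partial x^{(2)}=\mathrm{Id}$, the chain rule yields the commutation relations
\begin{equation*}
\nabla^{(1)}P_tf=\mathbb{E}\big[\nabla^{(1)}f(\textbf{X}_t)+tB_1\nabla^{(2)}f(\textbf{X}_t)\big],\qquad
\nabla^{(2)}P_tf=\mathbb{E}\big[\nabla^{(2)}f(\textbf{X}_t)\big].
\end{equation*}

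Next I would form the combination appearing on the left of \eqref{BE}; by linearity of the expectation it collapses to the exact identity
\begin{equation*}
\nabla^{(1)}P_tf+\alpha_2 B_1\nabla^{(2)}P_tf=\mathbb{E}\big[\nabla^{(1)}f(\textbf{X}_t)+(\alpha_2+t)B_1\nabla^{(2)}f(\textbf{X}_t)\big],
\end{equation*}
valid for every $\alpha_2\in\mathbb{R}$. Since $w\mapsto\|w\|_{A_0}^2=w^{*}A_0 w$ is a nonnegative (hence convex) quadratic form on $\mathbb{R}^{m_0}$, Jensen's inequality applied to the random vector $V:=\nabla^{(1)}f(\textbf{X}_t)+(\alpha_2+t)B_1\nabla^{(2)}f(\textbf{X}_t)$ gives $\|\mathbb{E}[V]\|_{A_0}^2\le\mathbb{E}\|V\|_{A_0}^2$, which is precisely \eqref{BE}. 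Because the identity above holds for all $\alpha_2$, the same inequality produces both the right and the reverse estimates through different choices of the free parameter.

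The one genuinely technical point, and the place where the hypothesis that $f$ has bounded second derivatives enters, is the justification of differentiation under the expectation sign: bounded second derivatives make $\nabla f$ globally Lipschitz (so of at most linear growth), while $\textbf{X}_t$ is Gaussian with all moments finite, so dominated convergence legitimizes passing $\nabla$ inside $\mathbb{E}$ and guarantees that every expectation above is finite. The key structural observation that makes the argument clean is that, since the drift is linear, the first variation of the flow carries no martingale remainder; this is exactly why the synchronous coupling yields an \emph{identity} rather than merely an inequality, and the only loss in the whole argument is the convexity step at the very end.
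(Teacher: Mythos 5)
Your proof is correct, and the underlying mechanism is the same one the paper exploits — under synchronous driving, a perturbation of the starting point is transported deterministically, because the Jacobian of the affine flow is the constant block matrix with entries $I$, $0$, $tB_1^*$, $I$ — but your execution differs in a way worth noting. The paper never states a commutation identity: it runs two coupled copies from the shifted starting points \eqref{eq ini-1}, Taylor-expands $f(\mathbf{X}_t)-f(\widetilde{\mathbf{X}}_t)$ with a second-order remainder, divides by $\varepsilon$ and lets $\varepsilon\to 0$ to obtain the directional \emph{inequality} $\left|\left(\nabla^{(1)}P_tf+\alpha_2B_1\nabla^{(2)}P_tf\right)\cdot v\right|\le P_t\left|\left(\nabla^{(1)}f+(\alpha_2+t)B_1\nabla^{(2)}f\right)\cdot v\right|$, and then needs a separate supremum over $\|v\|=1$ together with Cauchy--Schwarz to upgrade this to the $A_0$-norm statement \eqref{BE}. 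You instead differentiate the flow of a single process in its starting point, which yields the exact identity $\nabla^{(1)}P_tf+\alpha_2B_1\nabla^{(2)}P_tf=\mathbb{E}\left[\nabla^{(1)}f(\mathbf{X}_t)+(\alpha_2+t)B_1\nabla^{(2)}f(\mathbf{X}_t)\right]$, after which one application of Jensen for the convex quadratic form $w\mapsto w^*A_0w$ gives \eqref{BE}. Your route is cleaner and more informative: it isolates the Bismut-type commutation relation that the paper only alludes to (Remark~\ref{rem exp}), shows that the only loss in the entire estimate is the final convexity step, and makes transparent why one identity, valid for all $\alpha_2$, produces both the right ($\alpha_2=0$) and reverse ($\alpha_2=-t$) inequalities of Corollary~\ref{BE2-cor}. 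What the paper's two-point coupling formulation buys in exchange is that it avoids differentiating under the expectation sign — differentiability of $P_tf$ in the relevant directions is extracted from limits of difference quotients rather than presupposed — and its phrasing survives in situations where one controls only a coupling rather than an explicit affine flow (indeed it is this phrasing that the paper reuses verbatim for $r\ge 1$ and for the logarithmic version, Propositions~\ref{prop-BE-3} and~\ref{prop-BEln-3}). Your dominated-convergence justification (globally Lipschitz $\nabla f$ plus Gaussian moments of $\mathbf{X}_t$) is adequate for the exchange of derivative and expectation, so there is no gap.
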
\begin{proof}Consider two copies of hypoelliptic diffusions
\begin{equation}\label{coupling-1}\begin{split}
\mathbf{X}_t&=(X_1(t),X_2(t))=\left(x^{(1)}+\sigma B(t), x^{(2)}+tB_1^*x^{(1)}+\int_0^tB_1^*\sigma B(s)ds\right),\\
\widetilde{\mathbf{X}}_t&=(\widetilde{X}_1(t),\widetilde{X}_2(t))=\left(\widetilde{x}^{(1)}+\sigma \widetilde{B}(t), \widetilde{x}^{(2)}+tB_1^*\widetilde{x}^{(1)}+\int_0^tB_1^*\sigma \widetilde{B}(s)ds\right),\end{split}
\end{equation}
where   $B(t)$ and $\widetilde{B}(t)$ are two Brownian motions in $\mathbb{R}^{m_0}$ started at $0$. $\mathbf{X}_t$ starts at $(x^{(1)},x^{(2)})$ and $\widetilde{\mathbf{X}}_t$ starts at $(\widetilde{x}^{(1)},\widetilde{x}^{(2)})$. Now let us  synchronously couple $(B(t),\widetilde{B}(t))$ for all the time $t$ ($B(t)=\widetilde{B}(t),\forall t$) and take \begin{equation}\label{eq ini-1}
x^{(1)}=\widetilde{x}^{(1)}+\varepsilon v,\ x^{(2)}=\widetilde{x}^{(2)}+\alpha_2\varepsilon B^*_1v
\end{equation} for any real constant $\alpha_2\in \mathbb{R}$ and some constant vector $v\in\mathbb{R}^{m_0}$  so that, for any $t>0$,
\begin{equation}\label{eq ini-10}
{X}_1(t)-\widetilde{{X}}_1(t)=\varepsilon v,\ \ X_2(t)-\widetilde{X}_2(t)=(\alpha_2+t)\varepsilon B^*_1v.
\end{equation}
By using an estimate on the remainder $R$ of Taylor's approximation to $f$ and the assumption that  $f\in C^2(\mathbb{R}^{m_0}\times \mathbb{R}^{m_1})$ has bounded second derivatives,  there exists a positive constant $C_f\ge 0$,
\begin{align*}
|f(\mathbf{X}_t)-f(\widetilde{\mathbf{X}}_t)|&=\left|\nabla^{(1)}f(\widetilde{\mathbf{X}}_t)({X}_1(t)-\widetilde{{X}}_1(t))+\nabla^{(2)}f(\widetilde{\mathbf{X}}_t)({X}_2(t)-\widetilde{{X}}_2(t))+R\right|\\
&\le \varepsilon \left|\left(\nabla^{(1)}f(\widetilde{\mathbf{X}}_t)+(\alpha_2+t)B_1\nabla^{(2)}f(\widetilde{\mathbf{X}}_t) \right)\cdot v\right|+\varepsilon^2C_f\left(1+|\alpha_2|+t\right)^2\|B_1B_1^*\|_{HS}^2\|v\|^2,
\end{align*}
where $\|B_1B_1^*\|_{HS}$ is the Hilbert Schmidt norm of the $m_0\times m_0$ matrix $B_1B_1^*$. Using this estimate and Jensen's inequality, we have
\begin{align*}
&\left|P_tf(x^{(1)},x^{(2)})-P_tf(\widetilde{x}^{(1)},\widetilde{x}^{(2)})\right|\\
&=\left|\mathbb{E}^{((x^{(1)},x^{(2)}),(\widetilde{x}^{(1)},\widetilde{x}^{(2)}))}[f(\mathbf{X}_t)-f(\widetilde{\mathbf{X}}_t)]\right|\\
&\le \mathbb{E}^{((x^{(1)},x^{(2)}),(\widetilde{x}^{(1)},\widetilde{x}^{(2)}))}\left|f(\mathbf{X}_t)-f(\widetilde{\mathbf{X}}_t)\right|\\
&\le \varepsilon\mathbb{E}^{((x^{(1)},x^{(2)}),(\widetilde{x}^{(1)},\widetilde{x}^{(2)}))}\left|\left(\nabla^{(1)}f(\widetilde{\mathbf{X}}_t)+(\alpha_2+t)B_1\nabla^{(2)}f(\widetilde{\mathbf{X}}_t)\right)\cdot v\right|+\varepsilon^2C_f\left(1+|\alpha_2|+t\right)^2\|B_1B_1^*\|_{HS}^2\|v\|^2.
\end{align*}
Dividing out by $\varepsilon$ and taking $\varepsilon\to 0$ (hence $\widetilde{x}^{(1)}\to x^{(1)}$ and $\widetilde{x}^{(2)}\to x^{(2)}$ since $\widetilde{x}^{(2)}-x^{(2)}=\alpha_2\varepsilon B^*_1v$),  we have that
\begin{align*}
\limsup_{\varepsilon\to 0}\frac{\left|P_tf(x^{(1)},x^{(2)})-P_tf(\widetilde{x}^{(1)},\widetilde{x}^{(2)})\right|}{\varepsilon}\le P_t\left(\left|\left(\nabla^{(1)}f+(\alpha_2+t)B_1\nabla^{(2)}f\right)\cdot v\right|\right)(x^{(1)},x^{(2)}). \end{align*}
Meanwhile, we can see that
$$
\limsup_{\varepsilon\to 0}\frac{\left|P_tf(x^{(1)},x^{(2)})-P_tf(\widetilde{x}^{(1)},\widetilde{x}^{(2)})\right|}{\varepsilon} =\left|\left(\nabla^{(1)}P_tf(x^{(1)},x^{(2)})+\alpha_2 B_1\nabla^{(2)}P_tf(x^{(1)},x^{(2)})\right)\cdot v\right|$$
thus
$$
\left|\left(\nabla^{(1)}P_tf(x^{(1)},x^{(2)})+\alpha_2 B_1\nabla^{(2)}P_tf(x^{(1)},x^{(2)})\right)\cdot v\right|\le P_t\left|\left(\nabla^{(1)}f(x^{(1)},x^{(2)})+(\alpha_2+t)B_1\nabla^{(2)}f(x^{(1)},x^{(2)})\right)\cdot v\right|.$$
Notice that
\begin{align*}
&\left\|\nabla^{(1)}P_tf(x^{(1)},x^{(2)})+\alpha_2 B_1\nabla^{(2)}P_tf(x^{(1)},x^{(2)})\right\|_{A_0}\\
&=\left\|\sigma^*\left(\nabla^{(1)}P_tf(x^{(1)},x^{(2)})+\alpha_2 B_1\nabla^{(2)}P_tf(x^{(1)},x^{(2)})\right)\right\|\\
&=\sup_{\|v\|=1}\left|\left\langle\sigma^*\left(\nabla^{(1)}P_tf(x^{(1)},x^{(2)})+\alpha_2 B_1\nabla^{(2)}P_tf(x^{(1)},x^{(2)})\right),v\right\rangle\right|\\
&=\sup_{\|v\|=1}\left|\left\langle \left(\nabla^{(1)}P_tf(x^{(1)},x^{(2)})+\alpha_2 B_1\nabla^{(2)}P_tf(x^{(1)},x^{(2)})\right),\sigma v\right\rangle\right|\\
&\le \sup_{\|v\|=1} P_t\left(\left|\left\langle \left(\nabla^{(1)}f+(\alpha_2+t)B_1\nabla^{(2)}f\right),\sigma v\right\rangle\right|\right)(x^{(1)},x^{(2)})\\
&=\sup_{\|v\|=1} P_t\left(\left|\sigma^*\left\langle\left(\nabla^{(1)}f+(\alpha_2+t)B_1\nabla^{(2)}f\right), v\right\rangle\right|\right)(x^{(1)},x^{(2)}).
\end{align*}
By Cauchy-Schwarz inequality,  \begin{align*}
 P_t&\left(\left|\left\langle\sigma^*\left(\nabla^{(1)}f+(\alpha_2+t)B_1\nabla^{(2)}f\right), v\right\rangle\right|\right)(x^{(1)},x^{(2)})\\
 &\le \|v\|^{}\cdot P_t\left(\left\|\sigma^*\left(\nabla^{(1)}f+(\alpha_2+t)B_1\nabla^{(2)}f\right)\right\|^{}\right)(x^{(1)},x^{(2)})\\
 &\le\|v\|^{} \cdot P_t\left(\|\nabla^{(1)}f+(\alpha_2+t)B_1\nabla^{(2)}f\|_{A_0}^2\right)^{\frac12}(x^{(1)},x^{(2)}).
\end{align*}
Hence we have
$$
\left\|\nabla^{(1)}P_tf(x^{(1)},x^{(2)})+\alpha_2 B_1\nabla^{(2)}P_tf(x^{(1)},x^{(2)})\right\|_{A_0}^2\le  P_t\left(\|\nabla^{(1)}f+(\alpha_2+t)B_1\nabla^{(2)}f\|_{A_0}^2\right)(x^{(1)},x^{(2)}).
$$
Thus we complete the proof.
\end{proof}

\begin{remark}
In the above proposition,  we can assume $f$ is $C^1\left(\mathbb{R}^{m_0}\times\mathbb{R}^{m_1}\right)$ globally Lipschitzian, since $P_s$ has a Gaussian kernel.
\end{remark}

\begin{corollary}\label{BE2-cor}
Let $f\in C^1\left(\mathbb{R}^{m_0}\times\mathbb{R}^{m_1}\right)$ be a  globally Lipschitz function. Then for any $t\ge0$, $x^{(1)}\in \mathbb{R}^{m_0}, x^{(2)}\in\mathbb{R}^{m_1}$, we have
\begin{equation}\label{zero}
2\Gamma(P_tf)(x^{(1)},x^{(2)})\le P_t\left(\left\|\nabla^{(1)}f+tB_1\nabla^{(2)}f\right\|_{A_0}^2\right)(x^{(1)},x^{(2)}).
\end{equation}
and the following reverse inequality:
\begin{equation}\label{zero-t}
2P_t\left(\Gamma(f)\right)(x^{(1)},x^{(2)})\ge \left\|\nabla^{(1)}P_tf(x^{(1)},x^{(2)})-t B_1\nabla^{(2)}P_tf(x^{(1)},x^{(2)})\right\|_{A_0}^2.
\end{equation}
\end{corollary}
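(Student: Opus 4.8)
The plan is to obtain both \eqref{zero} and \eqref{zero-t} as immediate specializations of Proposition \ref{prop-BE}, once the carr\'e du champ $\Gamma$ has been computed explicitly. The only genuine content is an algebraic identification of $\Gamma$ together with two well-chosen values of the free parameter $\alpha_2$; there is no real analytic obstacle here, since the heavy lifting (the coupling argument and the limit $\varepsilon\to0$) is already done in Proposition \ref{prop-BE}.

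First I would compute $\Gamma$ from the definition $\Gamma(f)=\frac12(\mathcal{L}(f^2)-2f\mathcal{L}f)$. Because the first-order drift part of $\mathcal{L}$ satisfies the Leibniz rule it contributes nothing to $\Gamma$, and the constant second-order part $\frac12\operatorname{div}(AD)=\frac12\sum_{ij}a_{ij}\partial_i\partial_j$ yields $\Gamma(f)=\frac12\langle A\nabla f,\nabla f\rangle$. Invoking the block structure \eqref{ABM}, where the only nonzero block of $A$ is $A_0$ acting on the first $m_0$ coordinates, this collapses to
$$
2\Gamma(f)=\langle A\nabla f,\nabla f\rangle=\left(\nabla^{(1)}f\right)^*A_0\,\nabla^{(1)}f=\left\|\nabla^{(1)}f\right\|_{A_0}^2 .
$$
In particular $2\Gamma(P_tf)=\|\nabla^{(1)}P_tf\|_{A_0}^2$ as well. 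Substituting $\alpha_2=0$ into \eqref{BE} then gives exactly
$$
\left\|\nabla^{(1)}P_tf\right\|_{A_0}^2\le P_t\left(\left\|\nabla^{(1)}f+tB_1\nabla^{(2)}f\right\|_{A_0}^2\right),
$$
which is \eqref{zero} after rewriting the left-hand side as $2\Gamma(P_tf)$.

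For the reverse estimate \eqref{zero-t} I would exploit the freedom in $\alpha_2$ in the opposite direction: choose $\alpha_2=-t$. This is the key trick, because it makes the time-dependent coefficient $(\alpha_2+t)$ on the right-hand side of \eqref{BE} vanish identically. Proposition \ref{prop-BE} then reads
$$
\left\|\nabla^{(1)}P_tf-tB_1\nabla^{(2)}P_tf\right\|_{A_0}^2\le P_t\left(\left\|\nabla^{(1)}f\right\|_{A_0}^2\right)=P_t\left(2\Gamma(f)\right)=2P_t\left(\Gamma(f)\right),
$$
using the identity $2\Gamma(f)=\|\nabla^{(1)}f\|_{A_0}^2$ established above, which is precisely \eqref{zero-t}. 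Thus the whole corollary reduces to reading \eqref{BE} at the two endpoints $\alpha_2=0$ and $\alpha_2=-t$; the only point requiring care is the correct computation of $\Gamma$ from the degenerate matrix $A$, and noticing that $\alpha_2=-t$ is exactly the value that cancels the extra drift-induced term and turns the right Bakry--\'Emery inequality into its reverse counterpart.
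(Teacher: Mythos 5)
Your proposal is correct and follows essentially the same route as the paper: the paper likewise notes $\Gamma(f)=\frac12\|\nabla^{(1)}f\|_{A_0}^2$ (which you verify explicitly rather than merely assert) and then specializes Proposition \ref{prop-BE} to $\alpha_2=0$ for \eqref{zero} and $\alpha_2=-t$ for \eqref{zero-t}. The only implicit point, in both your argument and the paper's, is that the $C^2$ hypothesis of Proposition \ref{prop-BE} can be relaxed to $C^1$ globally Lipschitz, which is covered by the paper's remark that $P_s$ has a Gaussian kernel.
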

\begin{proof}
Notice that $\Gamma(f)(x^{(1)},x^{(2)})=\frac12\|\nabla^{(1)} f\|_{A_0}^2(x^{(1)},x^{(2)})$ for any $f\in C^1(\mathbb{R}^{m_0}\times \mathbb{R}^{m_1})$. If we take $\alpha_2=0$ in \eqref{BE}, then \eqref{zero} follows immediately. For the reverse inequality \eqref{zero-t}, we only need to take $\alpha_2=-t$.
\end{proof}

\begin{remark}\label{rem exp}\begin{description}
\item{(1).} Tracking back to \eqref{eq ini-1} and \eqref{eq ini-10}, the right Bakry-\'Emery type inequality \eqref{zero} follows from the coupling \eqref{coupling-1} with the starting point $x^{(1)}=\widetilde{x}^{(1)}+\varepsilon v$ and $x^{(2)}=\widetilde{x}^{(2)}$. While the reverse Bakry-\'Emery type inequality \eqref{zero-t} follows from the coupling \eqref{coupling-1} with  $X_2(t)=\widetilde{X}_2(t)$ for fixed $t>0$  by choosing  appropriate starting point. Hence the second coordinate $\cdot^{(2)}$ in $(\cdot^{(1)},\cdot^{(2)})$ plays important roles in the coupling \eqref{coupling-1}. While the choice of the difference between the first coordinates  is obvious, it is used  to obtain the gradient  of $P_tf$ along the vector $v$, see for example the Bismut type  formula  in \cite[Theorem 1.3.8]{Wa14}.
\item{(2).} After submitting  the manuscript, we are indicated by the reviewers that similar results have been obtained in the very recent paper \cite{BGHKM23} by a different approach for general $B$ in \eqref{ABM}. The Kolmogorov type operator \eqref{op1} with the nilpotent matrix $B$ in \eqref{ABM} is called principal part of the Kolmogorov type operator \eqref{op1} with the general matrix $B$, see \cite[(1.25),(1.26)]{LP94}. If the matrix $B$  in \eqref{op1} is not nilpotent, without loss of any generality, we can assume the  $(m_0+m_1)\times (m_0+m_1)$ matrix  $B$ has the following form
    $$\left(
    \begin{matrix}
    B_{11}&B_{12}\\
    B_{21}&B_{22}
    \end{matrix}\right)
    $$
where  $B_{11}$,  $B_{12}$,  $B_{21}$ and $B_{22}$  are  $m_0\times m_0$, $m_0\times m_1$, $ m_1\times m_0$,   $m_1\times m_1$ matrices respectively. The associated Markov process ${\bf{X}}_t=(X_1(t),X_2(t))$ can be described as
 \begin{equation*}\label{markov3}\begin{cases}
dX_1(t)&=\left(B_{11}^*X_1(t)+B_{21}^*X_2(t)\right)dt+\sigma dB(t),\\
dX_{2}(t)&=\left(B_{12}^*X_1(t)+B_{22}^*X_2(t)\right)dt.
\end{cases}\end{equation*}
In this case, $X_1(t)$ and $X_2(t)$ interact each other,    it seems difficult to obtain similar result as \eqref{BE} by similar procedure as the proof of Proposition \ref{BE}.
\end{description}
\end{remark}

\subsection{Bakry-\'Emery type inequality: $r\ge1$.}\label{sec-2.2}

In this subsection, we generalize the results for $r=1$ in above subsection to the case of $r\ge1$.
\begin{proposition}[Bakry-\'Emery type estimate]\label{prop-BE-3}
Let $f\in C^2\left(\mathbb{R}^{m_0}\times\cdots\times\mathbb{R}^{m_r}\right)$ with bounded  second derivatives. Then for any fixed $t\ge0$, $x=(x^{(1)},\cdots,x^{(r+1)})\in \mathbb{R}^{m_0}\times\cdots\times\mathbb{R}^{m_r}$ and any constants $\alpha_k\in \mathbb{R}\  (2\le k\le r+1)$,
\begin{equation}\label{BE-3}
\left\|\sum_{i'=1}^{r+1}\alpha_{i'}\prod_{j=1}^{i'-1}B_{j}\nabla^{(i')}P_tf(x)\right\|_{A_0}^2\le  P_t\left(\left\|\sum_{i'=1}^{r+1}\left(\sum_{i=0}^{i'-1}\alpha_{i'-i}\frac{t^i}{i!} \prod_{j=1}^{i'-1}B_{j}\right)\nabla^{(i')}f\right\|_{A_0}^2\right)(x),\end{equation}
where $\alpha_1=1$.
\end{proposition}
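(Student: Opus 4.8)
The plan is to run exactly the synchronous-coupling argument of Proposition \ref{prop-BE}, but now over an $(r+1)$-level tower of processes and with a family of initial displacements indexed by the constants $\alpha_k$. First I would solve the system \eqref{markov} explicitly by iterated integration: since $X_1(t)=x^{(1)}+\sigma B(t)$ and each later layer is obtained by integrating the one below it, $X_{i'}(t)$ splits into a deterministic polynomial-in-$t$ part plus iterated stochastic integrals of $B$. The deterministic part is
$$X_{i'}(t) = x^{(i')} + \sum_{k=1}^{i'-1} \frac{t^{i'-k}}{(i'-k)!} \Big(\prod_{j=k}^{i'-1} B_j\Big)^{*} x^{(k)} + (\text{iterated integrals of } B),$$
the factorials coming from repeated integration of monomials, and $(\prod_{j=k}^{i'-1}B_j)^{*}=B_{i'-1}^{*}\cdots B_k^{*}$.

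Next I would couple two copies $\mathbf{X}_t,\widetilde{\mathbf{X}}_t$ driven by the same Brownian motion ($B(t)=\widetilde{B}(t)$). The essential point, exactly as for $r=1$, is that all the iterated stochastic integrals are identical for the two copies and cancel in the difference $\mathbf{X}_t-\widetilde{\mathbf{X}}_t$, leaving a purely deterministic, $\varepsilon$-linear expression. I would then prescribe the initial displacements by
$$x^{(k)} - \widetilde{x}^{(k)} = \varepsilon\,\alpha_k \Big(\prod_{j=1}^{k-1} B_j\Big)^{*} v, \qquad 1 \le k \le r+1,$$
with $\alpha_1=1$ and $v\in\mathbb{R}^{m_0}$ arbitrary; for $r=1$ this recovers \eqref{eq ini-1}. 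Substituting into the formula above and using the matrix identity $(\prod_{j=k}^{i'-1}B_j)^{*}(\prod_{j=1}^{k-1}B_j)^{*}=(\prod_{j=1}^{i'-1}B_j)^{*}$, every term of $X_{i'}(t)-\widetilde{X}_{i'}(t)$ becomes a scalar multiple of the single vector $(\prod_{j=1}^{i'-1}B_j)^{*}v$; after setting $i=i'-k$ the scalar collapses to $\varepsilon\sum_{i=0}^{i'-1}\alpha_{i'-i}t^i/i!$. This is precisely the coefficient on the right-hand side of \eqref{BE-3}, and it is the heart of the argument.

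From here the proof is mechanical and parallels Proposition \ref{prop-BE}. A first-order Taylor expansion of $f(\mathbf{X}_t)-f(\widetilde{\mathbf{X}}_t)$, with remainder controlled by $C_f\varepsilon^2$ via the bounded-second-derivative hypothesis, gives
$$|f(\mathbf{X}_t) - f(\widetilde{\mathbf{X}}_t)| \le \varepsilon\Big|\Big(\sum_{i'=1}^{r+1}\Big(\sum_{i=0}^{i'-1}\alpha_{i'-i}\tfrac{t^i}{i!}\Big)\prod_{j=1}^{i'-1}B_j\nabla^{(i')}f(\widetilde{\mathbf{X}}_t)\Big)\cdot v\Big| + O(\varepsilon^2).$$
Applying $P_t$, using $|P_tf(x)-P_tf(\widetilde{x})|\le\mathbb{E}|f(\mathbf{X}_t)-f(\widetilde{\mathbf{X}}_t)|$, dividing by $\varepsilon$ and letting $\varepsilon\to0$ (so $\widetilde{x}\to x$) identifies the limit of the left side as the directional derivative $|(\sum_{i'=1}^{r+1}\alpha_{i'}\prod_{j=1}^{i'-1}B_j\nabla^{(i')}P_tf(x))\cdot v|$. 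Finally, replacing $v$ by $\sigma v$, taking the supremum over $\|v\|=1$ together with $\|w\|_{A_0}=\|\sigma^{*}w\|=\sup_{\|v\|=1}|\langle w,\sigma v\rangle|$, and closing with Cauchy-Schwarz and Jensen ($P_tg\le(P_tg^2)^{1/2}$) turns both sides into $\|\cdot\|_{A_0}$-expressions and yields \eqref{BE-3} after squaring. The main obstacle I anticipate is bookkeeping rather than conceptual: computing the iterated-integral solution with its factorial coefficients, verifying the transpose-product identity for the $B_j$, and re-indexing the double sum so that the collapsed scalar matches $\sum_{i=0}^{i'-1}\alpha_{i'-i}t^i/i!$ exactly. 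One should also check that $C_f$ is uniform in $\varepsilon$ — it will depend on $t$, the $\alpha_k$ and Hilbert-Schmidt norms of products of the $B_j$, just as the factor $(1+|\alpha_2|+t)^2\|B_1B_1^{*}\|_{HS}^2$ does when $r=1$ — which is immediate from the bounded-second-derivative assumption.
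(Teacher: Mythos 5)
Your proposal is correct and follows essentially the same route as the paper's own proof: the same explicit iterated-integral solution of \eqref{markov}, the same synchronous coupling with initial displacements $x^{(k)}-\widetilde{x}^{(k)}=\varepsilon\alpha_k\prod_{j=1}^{k-1}B_{k-j}^{*}v$ collapsing $X_k(t)-\widetilde{X}_k(t)$ to $\varepsilon\bigl(\sum_{i=0}^{k-1}\alpha_{k-i}\tfrac{t^i}{i!}\bigr)\prod_{j=1}^{k-1}B_{k-j}^{*}v$, and the same Taylor/limit/Cauchy--Schwarz--Jensen closing argument with the $\sup_{\|v\|=1}$ characterization of $\|\cdot\|_{A_0}$. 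The paper even packages your anticipated remainder constant as $C_{B,\alpha,t}=\sum_{k=1}^{r+1}\bigl(\sum_{i=0}^{k-1}\alpha_{k-i}\tfrac{t^i}{i!}\bigr)^2\bigl\|\prod_{i=1}^{k-1}B_i\prod_{j=1}^{k-1}B^*_{k-j}\bigr\|_{HS}^2$, exactly as you predicted.
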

\begin{proof}Consider two copies of hypoelliptic diffusions
\begin{equation*}
\mathbf{X}_t=\left(\begin{matrix}
&X_1(t)\\
&X_2(t)\\
&\cdots\\
&X_{r+1}(t)
\end{matrix}\right)
, \ \ \widetilde{\mathbf{X}}_t=\left(\begin{matrix}
&\widetilde{X}_1(t)\\
&\widetilde{X}_2(t)\\
&\cdots\\
&\widetilde{X}_{r+1}(t)
\end{matrix}\right)
\end{equation*}
with $X_1(t)=x^{(1)}+\sigma B(t),\  \widetilde{X}_1(t)=\widetilde{x}^{(1)}+\sigma \widetilde{B}(t),$ and for $2\le k \le r+1$,
\begin{equation}\label{coupling-2}\begin{split}X_{k}(t)&=x^{(k)}+\sum_{i=1}^{k-1}\frac{t^i}{i!}\prod_{j=1}^iB_{k-j}^* x^{(k-i)}+\int_0^tdt_{k-1}\int_0^{t_{k-1}}dt_{k-2}\cdots\int_0^{t_2}\prod_{j=1}^{k-1}B_{k-j}^*\sigma B(t_1)dt_1,\\
\widetilde{X}_{k}(t)&=\widetilde{x}^{(k)}+\sum_{i=1}^{k-1}\frac{t^i}{i!}\prod_{j=1}^iB_{k-j}^* \widetilde{x}^{(k-i)}+\int_0^tdt_{k-1}\int_0^{t_{k-1}}dt_{k-2}\cdots\int_0^{t_2}\prod_{j=1}^{k-1}B_{k-j}^*\sigma \widetilde{B}(t_1)dt_1,
\end{split}\end{equation}
where $B(t)$ and $\widetilde{B}(t)$ are two Brownian motions in $\mathbb{R}^{m_0}$ started at $0$. $\mathbf{X}_t$ starts at $(x^{(1)},x^{(2)},\cdots,x^{(r+1)})$ and $\widetilde{\mathbf{X}}_t$ starts at $(\widetilde{x}^{(1)},\widetilde{x}^{(2)},\cdots,\widetilde{x}^{(r+1)})$. Now let us  synchronously couple $(B(t),\widetilde{B}(t))$ for all the time $t$ ($B(t)=\widetilde{B}(t),\forall t$) and take \begin{equation}\label{eq ini-2}
x^{(1)}=\widetilde{x}^{(1)}+ \varepsilon\alpha_1 v \mbox{ and } x^{(k)}=\widetilde{x}^{(k)}+\alpha_{k}\varepsilon \prod_{j=1}^{k-1}B_{k-j}^*v,2\le k\le r+1
\end{equation} for any constants $\alpha_j\in \mathbb{R},2\le j\le r+1$ with $\alpha_1=1$ and some constant vector $v\in\mathbb{R}^{m_0}$  so that, for any $t>0$, $2\le k\le r+1$,
$$
X_1(t)-\widetilde{X}_1(t)=\varepsilon \alpha_1 v,\ \ X_k(t)-\widetilde{X}_k(t)=\varepsilon\left(\sum_{i=0}^{k-1}\alpha_{k-i}\frac{t^i}{i!} \right)\prod_{j=1}^{k-1}B^*_{k-j}v
$$
Similar to the proof of Proposition \ref{prop-BE}, we have %%
%%By using an estimate on the remainder $R$ of Taylor's approximation to $f$ and the assumption that  $f\in C^2(\mathbb{R}^{m_0}\times \mathbb{R}^{m_1})$ has bounded second %%derivatives,  there exists a positive constant $C_f\ge 0$,
%%\begin{align*}
%%|f(\mathbf{X}_t)-f(\widetilde{\mathbf{X}}_t)|&=\left|\sum_{i'=1}^{r+1}\nabla^{(i')}f(\widetilde{\mathbf{X}}(t))\cdot(X_{i'}(t)-\widetilde{X}_{i'}(t))+R\right|\\
%%&\le \varepsilon \left|\sum_{i'=1}^{r+1}\left(\sum_{i=0}^{i'-1}\alpha_{i'-i}\frac{t^i}{i!} \prod_{j=1}^{i'-1}B_{j}\right)\nabla^{(i')}f(\widetilde{\mathbf{X}}(t))\cdot %%v\right|+\varepsilon^2C_fC_{B,\alpha,t}\|v\|^2,
%%\end{align*}
%%where $C_{B,\alpha,t}=\sum_{k=1}^{r+1}\left(\sum_{i=0}^{k-1}\alpha_{k-i}\frac{t^i}{i!}\right)^2\left\|\prod_{i=1}^{k-1}B_i\prod_{j=1}^{k-1}B^*_{k-j}\right\|_{HS}^2$ with the %%notation $\|\cdot\|_{HS}$ the Hilbert Schmidt norm of the $m_0\times m_0$ matrix $\cdot$. Using this estimate and Jensen's inequality, we have for %%$x=(x^{(1)},\cdots,x^{(r+1)}),\widetilde{x}=(\widetilde{x}^{(1)},\cdots,\widetilde{x}^{(r+1)})$,
\begin{align*}
&\left|P_tf(x)-P_tf(\widetilde{x})\right|%%\\
%%&=\left|\mathbb{E}^{(x,\widetilde{x})}[f(\mathbf{X}_t)-f(\widetilde{\mathbf{X}}_t)]\right|\\
%%&\le \mathbb{E}^{(x,\widetilde{x})}\left|f(\mathbf{X}_t)-f(\widetilde{\mathbf{X}}_t)\right|\\
\le \varepsilon\mathbb{E}^{(x,\widetilde{x})}\left|\sum_{i'=1}^{r+1}\left(\sum_{i=0}^{i'-1}\alpha_{i'-i}\frac{t^i}{i!} \prod_{j=1}^{i'-1}B_{j}\right)\nabla^{(i')}f(\widetilde{\mathbf{X}}_t)\cdot v\right|+\varepsilon^2C_fC_{B,\alpha,t}\|v\|^2
\end{align*}
where $C_{B,\alpha,t}=\sum_{k=1}^{r+1}\left(\sum_{i=0}^{k-1}\alpha_{k-i}\frac{t^i}{i!}\right)^2\left\|\prod_{i=1}^{k-1}B_i\prod_{j=1}^{k-1}B^*_{k-j}\right\|_{HS}^2$. Dividing out by $\varepsilon$ and taking $\varepsilon\to 0$ (hence $\widetilde{x}_k\to x_k$ for all $1\le k\le r+1$),  we have that
\begin{align*}
\limsup_{\varepsilon\to 0}\frac{\left|P_tf(x)-P_tf(\widetilde{x})\right|}{\varepsilon}\le P_t\left(\left|\sum_{i'=1}^{r+1}\left(\sum_{i=0}^{i'-1}\alpha_{i'-i}\frac{t^i}{i!} \prod_{j=1}^{i'-1}B_{j}\right)\nabla^{(i')}f\cdot v\right|\right)(x). \end{align*}
On the other hand, we have
$$
\limsup_{\varepsilon\to 0}\frac{\left|P_tf(x)-P_tf(\widetilde{x})\right|}{\varepsilon} =\left|\sum_{i'=1}^{r+1}\alpha_{i'}\prod_{j=1}^{i'-1}B_{j}\nabla^{(i')}P_tf(x)\cdot v \right|$$
thus
\begin{equation}\label{BEM-3}
\left|\sum_{i'=1}^{r+1}\alpha_{i'}\prod_{j=1}^{i'-1}B_{j}\nabla^{(i')}P_tf(x)\cdot v \right|\le P_t\left(\left|\sum_{i'=1}^{r+1}\left(\sum_{i=0}^{i'-1}\alpha_{i'-i}\frac{t^i}{i!} \prod_{j=1}^{i'-1}B_{j}\right)\nabla^{(i')}f\cdot v\right|\right)(x).\end{equation}
Hence we have
$$
\left\|\sum_{i'=1}^{r+1}\alpha_{i'}\prod_{j=1}^{i'-1}B_{j}\nabla^{(i')}P_tf(x)\right\|_{A_0}^2\le  P_t\left(\left\|\sum_{i'=1}^{r+1}\left(\sum_{i=0}^{i'-1}\alpha_{i'-i}\frac{t^i}{i!} \prod_{j=1}^{i'-1}B_{j}\right)\nabla^{(i')}f(\widetilde{\mathbf{X}}_t)\right\|_{A_0}^2\right)(x).
$$
Thus we complete the proof.
\end{proof}
\begin{remark}
\item{(1).}
 In the above proposition,  we can assume $f$ is $C^1\left(\mathbb{R}^{m_0}\times\cdots \times\mathbb{R}^{m_r}\right)$ globally Lipschitzian, since $P_s$ has a Gaussian kernel.
 \\
 \item{(2).} We also can obtain the $L^q(q\ge1)$ version of estimate \eqref{BE-3} as Proposition 2.10 in \cite{BGM20}, it follows from \eqref{BEM-3} and H\"older inequality.
\end{remark}

\begin{corollary}\label{BE2-cor-3}
Let $f\in C^1\left(\mathbb{R}^{m_0}\times\cdots\times\mathbb{R}^{m_r}\right)$ be a globally Lipschitz function. Then for any $t\ge0$, $x=(x^{(1)},\cdots,x^{(r+1)})\in \mathbb{R}^{m_0}\times\cdots\times\mathbb{R}^{m_r}$,  we have
\begin{equation}\label{zero-3}
2\Gamma(P_tf)(x)\le P_t\left(\left\langle E(-t)AE^*(-t)\nabla f,\nabla f\right\rangle\right)(x).
\end{equation}
and the following reverse inequality:
\begin{equation}\label{zero-t-3}
\left\langle E(t)AE^*(t)\nabla P_tf(x),\nabla P_tf(x)\right\rangle\le 2P_t\left(\Gamma(f)\right)(x).
\end{equation}
\end{corollary}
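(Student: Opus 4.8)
The plan is to obtain both inequalities as special cases of Proposition \ref{prop-BE-3}, choosing the free parameters $(\alpha_k)_{k=1}^{r+1}$ just as \eqref{zero} and \eqref{zero-t} were extracted from \eqref{BE} when $r=1$. I use two structural facts throughout. First, since the operator \eqref{op1} carries $A_0$ only on the first block, $\Gamma(f)(x)=\frac12\|\nabla^{(1)}f\|_{A_0}^2(x)$. Second, because $A=\mathrm{diag}(A_0,0,\dots,0)$, for any vector $w$ with first block $w^{(1)}$ one has $\langle Aw,w\rangle=\|w^{(1)}\|_{A_0}^2$.

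Next I would record the block structure of the matrix exponentials. By definition $E(\mp t)=\exp(\pm tB^*)$, so $E^*(\mp t)=\exp(\pm tB)$; since $B$ is nilpotent this is a finite sum, and a direct computation of the powers of $B$ shows that its first block is
\[
\left(\exp(\pm tB)\nabla g\right)^{(1)}=\sum_{i'=1}^{r+1}\frac{(\pm t)^{i'-1}}{(i'-1)!}\prod_{j=1}^{i'-1}B_j\,\nabla^{(i')}g .
\]
Combining this with $\langle E(\mp t)AE^*(\mp t)u,u\rangle=\langle A\,E^*(\mp t)u,\,E^*(\mp t)u\rangle=\|(E^*(\mp t)u)^{(1)}\|_{A_0}^2$, both quadratic forms appearing in \eqref{zero-3} and \eqref{zero-t-3} are exactly squared $A_0$-norms of the vectors that already occur in \eqref{BE-3}.

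For the right inequality \eqref{zero-3} I would set $\alpha_1=1$ and $\alpha_2=\cdots=\alpha_{r+1}=0$. Then the left-hand side of \eqref{BE-3} collapses to $\|\nabla^{(1)}P_tf\|_{A_0}^2=2\Gamma(P_tf)$, and the inner coefficient $\sum_{i=0}^{i'-1}\alpha_{i'-i}\frac{t^i}{i!}$ reduces to $\frac{t^{i'-1}}{(i'-1)!}$; the displayed identity above (with $g=f$ and the top sign) then identifies the right-hand side with $P_t(\langle E(-t)AE^*(-t)\nabla f,\nabla f\rangle)$, which is \eqref{zero-3}. For the reverse inequality \eqref{zero-t-3} I would instead choose $\alpha_k=\frac{(-t)^{k-1}}{(k-1)!}$ for $1\le k\le r+1$. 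With this choice the left-hand side of \eqref{BE-3} is precisely $\langle E(t)AE^*(t)\nabla P_tf,\nabla P_tf\rangle$ by the displayed identity (with $g=P_tf$ and the bottom sign).

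The one genuine computation, and the step I expect to be the main obstacle, is to verify that for this second choice the right-hand side of \eqref{BE-3} collapses to $2P_t(\Gamma(f))=P_t(\|\nabla^{(1)}f\|_{A_0}^2)$, i.e.\ that every block $i'\ge2$ in the inner sum cancels. Writing the inner coefficient for $i'=k$ as $\sum_{m=1}^{k}\alpha_m\frac{t^{k-m}}{(k-m)!}$ and substituting $\alpha_m=\frac{(-t)^{m-1}}{(m-1)!}$ gives
\[
\sum_{m=1}^{k}\frac{(-t)^{m-1}}{(m-1)!}\frac{t^{k-m}}{(k-m)!}=\frac{1}{(k-1)!}\sum_{p=0}^{k-1}\binom{k-1}{p}(-t)^p t^{k-1-p}=\frac{(t-t)^{k-1}}{(k-1)!},
\]
which vanishes for every $k\ge2$ and equals $1$ for $k=1$. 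Hence only the $i'=1$ term survives, the right-hand side is $P_t(\|\nabla^{(1)}f\|_{A_0}^2)$, and \eqref{zero-t-3} follows. Apart from this binomial cancellation and the bookkeeping of the block exponentials, everything is a direct substitution into \eqref{BE-3}.
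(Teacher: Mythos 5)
Your proposal is correct and follows essentially the same route as the paper: both inequalities are extracted from Proposition \ref{prop-BE-3} with exactly the same parameter choices ($\alpha_k=0$ for $2\le k\le r+1$ in the right inequality, and $\alpha_k=(-1)^{k-1}\frac{t^{k-1}}{(k-1)!}$ in the reverse one), and your binomial computation showing $\sum_{i=0}^{i'-1}\alpha_{i'-i}\frac{t^i}{i!}=0$ for $i'\ge 2$ is precisely the cancellation the paper asserts without spelling out. The only difference is cosmetic: you identify the quadratic forms $\left\langle E(\mp t)AE^*(\mp t)\,\cdot\,,\cdot\right\rangle$ with the block sums in \eqref{BE-3} by directly computing the nilpotent exponential $\exp(\pm tB)$ and using $\langle Aw,w\rangle=\|w^{(1)}\|_{A_0}^2$, giving a self-contained verification of the identity \eqref{iden}, whereas the paper expands both quadratic forms into double sums over blocks and cites identity (3.6) of \cite{LP94}; both verifications are valid.
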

\begin{proof}
Notice that $\Gamma(f)=\frac12\|\nabla^{(1)} f\|_A$ for any $f\in C^1(\mathbb{R}^{m_0}\times\cdots\times\mathbb{R}^{m_r})$. If we take $\alpha_k=0$ $(2\le k\le r+1) $ in \eqref{BE-3}, then \begin{equation}\label{zero-3-1}
\Gamma(P_tf)(x)\le P_t\left(\left\|\sum_{k=0}^r\frac{t^k}{k!}\prod_{j=1}^kB_j\nabla^{(k+1)}f\right\|_{A_0}^2\right)(x).
\end{equation}
 Notice that
 \begin{align*}
 \left\|\sum_{k=0}^r\frac{t^k}{k!}\prod_{j=1}^kB_j\nabla^{(k+1)}f\right\|_{A_0}^2&=\sum_{k_1=0}^r\sum_{k_2=0}^r\left\langle {A_0}\frac{t^{k_1}}{k_1!}\prod_{j=1}^{k_1}B_j\nabla^{(k_1+1)}f,\frac{t^{k_2}}{k_2!}\prod_{j=1}^{k_2}B_j\nabla^{(k_2+1)}f\right\rangle\\
 &=\sum_{k_1=0}^r\sum_{k_2=0}^r\frac{t^{k_1+k_2}}{k_1!k_2!}\left\langle B_{k_2}^*\cdots B_1^*A_0B_1\cdots B_{k_1}\nabla^{(k_1+1)}f,\nabla^{(k_2+1)}f\right\rangle,
 \end{align*}
 and  meanwhile \begin{align*}
\left\langle E(-t)AE^*(-t)\nabla f,\nabla f\right\rangle&=\left\langle\sum_{k_1=0}^r\sum_{k_2=0}^r\left(E(-t)AE^*(-t)\right)_{k_2+1,k_1+1}\nabla^{(k_1+1)}f,\nabla^{(k_2+1)}f\right\rangle\\
&=\sum_{k_1=0}^r\sum_{k_2=0}^r\frac{t^{k_1+k_2}}{k_1!k_2!}\left\langle B_{k_2}^*\cdots B_1^*A_0B_1\cdots B_{k_1}\nabla^{(k_1+1)}f,\nabla^{(k_2+1)}f\right\rangle,
\end{align*}
where we apply the identity (3.6) in \cite{LP94} in the last equality and  $\left(E(-t)AE^*(-t)\right)_{k_2+1,k_1+1}$ means the block  at the position $(k_2+1,k_1+1)$ of the block  matrix $E(-t)AE^*(-t)$. It follows
\begin{equation}\label{iden}
\left\|\sum_{k=0}^r\frac{t^k}{k!}\prod_{j=1}^kB_j\nabla^{(k+1)}f\right\|_{A_0}^2=\left\langle E(-t)AE^*(-t)\nabla f,\nabla f\right\rangle.
\end{equation}
Combining with \eqref{zero-3-1} and \eqref{iden},  \eqref{zero-3} follows.

  For the reverse inequality \eqref{zero-t-3}, we only need to take
$$\alpha_k=(-1)^{k-1}\frac{t^{k-1}}{(k-1)!}, \ 2\le k\le r+1$$
which yields
$$\sum_{i=0}^{i'-1}\alpha_{i'-i}\frac{t^i}{i!}=0, 2\le i'\le r+1. $$
In this case, \eqref{BE-3} gives
$$
\left\|\sum_{i=1}^{r+1}\frac{(-1)^{i-1}t^{i-1}}{(i-1)!}\prod_{j=1}^{i-1}B_j\nabla^{(i)}P_tf(x)\right\|_{A_0}^2\le P_t\left(\Gamma(f)\right)(x).
$$
Together with \eqref{iden},  \eqref{zero-t-3} follows.
\end{proof}

\begin{remark}\label{rem exp2}
Tracking back to \eqref{eq ini-2}, \eqref{zero-3} follows from the coupling \eqref{coupling-2} with the starting point $x^{(1)}=\widetilde{x}^{(1)}+\varepsilon v$ and $x^{(k)}=\widetilde{x}^{(k)}$ for $2\le k\le r+1$. While \eqref{zero-t-3} follows from the coupling \eqref{coupling-2} with  $X_k(t)\equiv\widetilde{X}_k(t)$ for fixed $t$ and $2\le k\le r+1$   by choosing  appropriate starting point.

\end{remark}

Moreover we can obtain the following Bakry-\'Emery type inequality:

\begin{proposition}\label{prop-BEln-3}
Let $f\in C^2\left(\mathbb{R}^{m_0}\times\cdots\times\mathbb{R}^{m_r}\right)$ be a  positive, globally Lipschitz and bounded function.  Then for any fixed $t\ge0$, $x=(x^{(1)},\cdots,x^{(r+1)})\in \mathbb{R}^{m_0}\times\cdots\times\mathbb{R}^{m_r}$ and any constants $\alpha_k\in \mathbb{R} (2\le k\le r+1)$,
\begin{align}\label{BEln-3}
&P_tf(x)\left\|\sum_{i'=1}^{r+1}\alpha_{i'}\prod_{j=1}^{i'-1}B_{j}\nabla^{(i')}\ln P_tf(x)\right\|_{A_0}^2\le P_t\left(f\left\|\sum_{i'=1}^{r+1}\left(\sum_{i=0}^{i'-1}\alpha_{i'-i}\frac{t^i}{i!} \prod_{j=1}^{i'-1}B_{j}\right)\nabla^{(i')}\ln f\right\|^2_{A_0}\right)(x),
\end{align}
where $\alpha_1=1$.
\end{proposition}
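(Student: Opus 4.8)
The plan is to derive \eqref{BEln-3} directly from the pointwise directional inequality \eqref{BEM-3} already established inside the proof of Proposition \ref{prop-BE-3}, upgrading it to the logarithmic, quadratic form by the elementary identity $\nabla^{(i')}g=g\,\nabla^{(i')}\ln g$ (legitimate since $f>0$, and hence $P_tf>0$) together with a single application of the Cauchy--Schwarz inequality for the Markov kernel $P_t$. To keep the bookkeeping light, for a positive smooth $g$ write
\[
\mathcal{U}_t g=\sum_{i'=1}^{r+1}\alpha_{i'}\prod_{j=1}^{i'-1}B_{j}\nabla^{(i')}g,\qquad
\mathcal{V}_t g=\sum_{i'=1}^{r+1}\Big(\sum_{i=0}^{i'-1}\alpha_{i'-i}\frac{t^i}{i!}\prod_{j=1}^{i'-1}B_{j}\Big)\nabla^{(i')}g,
\]
both $\mathbb{R}^{m_0}$-valued, so that \eqref{BEM-3} reads $|\mathcal{U}_t(P_tf)\cdot v|\le P_t\big(|\mathcal{V}_tf\cdot v|\big)(x)$ and the target \eqref{BEln-3} reads $P_tf\,\|\mathcal{U}_t\ln P_tf\|_{A_0}^2\le P_t\big(f\,\|\mathcal{V}_t\ln f\|_{A_0}^2\big)$.

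First I would record the two factorizations $\mathcal{V}_tf=f\,\mathcal{V}_t\ln f$ and $\mathcal{U}_t(P_tf)=(P_tf)\,\mathcal{U}_t\ln P_tf$; these hold termwise because the matrices $\prod_{j=1}^{i'-1}B_j$ and the scalars $\alpha_{i'},\frac{t^i}{i!}$ do not depend on the space variable, so the scalar factor pulls out of $\nabla^{(i')}f=f\,\nabla^{(i')}\ln f$ and $\nabla^{(i')}P_tf=(P_tf)\nabla^{(i')}\ln P_tf$. Feeding the first factorization into \eqref{BEM-3} gives $|\mathcal{U}_t(P_tf)\cdot v|\le P_t\big(f\,|\mathcal{V}_t\ln f\cdot v|\big)(x)$. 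I would then split $f=\sqrt f\cdot\sqrt f$ and apply Cauchy--Schwarz against the probability measure $h(x,t;\cdot,0)\,dy$ (recall $P_t1=1$):
\[
P_t\big(f\,|\mathcal{V}_t\ln f\cdot v|\big)(x)\le \big(P_tf(x)\big)^{1/2}\Big(P_t\big(f\,|\mathcal{V}_t\ln f\cdot v|^2\big)(x)\Big)^{1/2}.
\]
Squaring, dividing by $P_tf(x)>0$, and using the second factorization on the left to replace $\mathcal{U}_t(P_tf)$ by $(P_tf)\,\mathcal{U}_t\ln P_tf$, I obtain the directional logarithmic estimate
\[
P_tf(x)\,|\mathcal{U}_t\ln P_tf\cdot v|^2\le P_t\big(f\,|\mathcal{V}_t\ln f\cdot v|^2\big)(x),\qquad v\in\mathbb{R}^{m_0}.
\]

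Finally I would pass from the pairing $\cdot\,v$ to the $A_0$-norm exactly as at the end of the proof of Proposition \ref{prop-BE}: substitute $v\mapsto\sigma v$ and use $\|w\|_{A_0}=\|\sigma^*w\|=\sup_{\|v\|=1}|\langle w,\sigma v\rangle|$. Taking the supremum over $\|v\|=1$, the left-hand side becomes $P_tf(x)\,\|\mathcal{U}_t\ln P_tf\|_{A_0}^2$, while on the right the pointwise bound $|\mathcal{V}_t\ln f\cdot\sigma v|^2=|\langle \sigma^*\mathcal{V}_t\ln f,v\rangle|^2\le\|\mathcal{V}_t\ln f\|_{A_0}^2$ (for $\|v\|=1$) makes the majorant free of $v$, equal to $P_t\big(f\,\|\mathcal{V}_t\ln f\|_{A_0}^2\big)(x)$; this is precisely \eqref{BEln-3}. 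I expect the only genuinely delicate points to be the Cauchy--Schwarz/$\sqrt f$ splitting and the careful transport of the $A_0$-norm through $v\mapsto\sigma v$, rather than any new analytic difficulty: the differentiability of $\ln f$ and $\ln P_tf$ and the applicability of \eqref{BEM-3} are guaranteed by the hypotheses on $f$ (positive, bounded, globally Lipschitz) and by the smoothing of the Gaussian kernel $P_t$.
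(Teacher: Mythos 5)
Your proof is correct, and it reaches \eqref{BEln-3} by a genuinely different, more economical route than the paper's. The paper does not post-process \eqref{BEM-3}: it re-runs the whole coupling argument with $\sqrt f$ in place of $f$ --- Taylor expansion of $\sqrt f(\mathbf{X}_t)-\sqrt f(\widetilde{\mathbf{X}}_t)$ along the synchronous coupling, Minkowski's inequality for $\bigl(\mathbb{E}^{(x,\widetilde x)}|\sqrt f(\mathbf{X}_t)-\sqrt f(\widetilde{\mathbf{X}}_t)|^2\bigr)^{1/2}$, the splitting $|P_tf(x)-P_tf(\widetilde x)|\le\mathbb{E}^{(x,\widetilde x)}\bigl|\bigl(\sqrt f(\mathbf{X}_t)-\sqrt f(\widetilde{\mathbf{X}}_t)\bigr)\bigl(\sqrt f(\mathbf{X}_t)+\sqrt f(\widetilde{\mathbf{X}}_t)\bigr)\bigr|$ followed by H\"older on the joint expectation, then division by $\varepsilon\bigl(\sqrt{P_tf}(x)+\sqrt{P_tf}(\widetilde x)\bigr)$ and the limits $\varepsilon\to0$, $\mathbb{E}^{(x,\widetilde x)}|\sqrt f(\mathbf{X}_t)+\sqrt f(\widetilde{\mathbf{X}}_t)|^2\to4P_tf(x)$; the output is, in your notation, $\|\mathcal{U}_t\sqrt{P_tf}\|_{A_0}\le\bigl(P_t\|\mathcal{V}_t\sqrt f\|_{A_0}^2\bigr)^{1/2}$, which is \eqref{BEln-3} via $\nabla^{(i')}\sqrt g=\tfrac12\sqrt g\,\nabla^{(i')}\ln g$. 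Your directional inequality $P_tf\,|\mathcal{U}_t\ln P_tf\cdot v|^2\le P_t\bigl(f\,|\mathcal{V}_t\ln f\cdot v|^2\bigr)$ is precisely the paper's intermediate estimate $|\mathcal{U}_t\sqrt{P_tf}\cdot v|\le\bigl(P_t|\mathcal{V}_t\sqrt f\cdot v|^2\bigr)^{1/2}$ in disguise (use $\mathcal{U}_t(P_tf)=2\sqrt{P_tf}\,\mathcal{U}_t\sqrt{P_tf}$ and $f\,|\mathcal{V}_t\ln f\cdot v|^2=4|\mathcal{V}_t\sqrt f\cdot v|^2$), but you obtain it in two lines by factoring $\mathcal{V}_tf=f\,\mathcal{V}_t\ln f$ inside \eqref{BEM-3} and applying Cauchy--Schwarz to the kernel $P_t$, whereas the paper applies Cauchy--Schwarz to the coupled expectation before the $\varepsilon$-limit. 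Your route buys brevity (no second pass through the coupling/Taylor/limit machinery) and dispenses with the paper's regularization $f\ge\delta>0$, since a possibly infinite right-hand side where $f$ degenerates is harmless in an upper bound; the paper's route keeps the derivation self-contained at the probabilistic level of the coupling, at the cost of repeating it. One point you should state as an explicit approximation step rather than leave implicit: \eqref{BEM-3} was proved for $f$ with bounded second derivatives, while Proposition \ref{prop-BEln-3} assumes only a positive, bounded, globally Lipschitz $C^2$ function; the reduction through the Gaussian smoothing of $P_t$ that you invoke at the end is exactly the reduction the paper itself performs at the start of its proof, so this is a presentational rather than a mathematical gap.
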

\begin{proof}As before, we assume $f\in C^2\left(\mathbb{R}^{m_0}\times\cdots\times\mathbb{R}^{m_r}\right)$ is a  positive and bounded function with bounded first and second derivatives. Without loss of any generality, we can assume $f\ge \delta>0$, otherwise we consider $f+\delta$ instead, and then $\delta\to0$.
Let us consider the same hypoelliptic diffusions $\mathbf{X}_t, \widetilde{\mathbf{X}}_t$  in the proof of Proposition \ref{prop-BE-3} with the start points $x=(x^{(1)},\cdots,x^{(r+1)})$ and $\widetilde{x}=(\widetilde{x}^{(1)},\cdots,\widetilde{x}^{(r+1)})$ with $x^{(1)}=\widetilde{x}^{(1)}+ \varepsilon v $,  $x^{(k)}=\widetilde{x}^{(k)}+\alpha_{k}\varepsilon \prod_{j=1}^{k-1}B_{k-j}^*v$ for any $\alpha_k\in \mathbb{R} (2\le k\le r+1 )$ and $v\in \mathbb{R}^{m_0}$.

For any positive function $f\in C^2(\mathbb{R}^{m_0}\times \cdots\times \mathbb{R}^{m_r})$ with bounded first and second derivatives, there exists a positive constant $C_f\ge 0$,
\begin{equation*}\aligned
|\sqrt{f}(\mathbf{X}_t)-\sqrt{f}(\widetilde{\mathbf{X}}_t)|\le &\varepsilon \left|\sum_{i'=1}^{r+1}\left(\sum_{i=0}^{i'-1}\alpha_{i'-i}\frac{t^i}{i!} \prod_{j=1}^{i'-1}B_{j}\right)\nabla^{(i')}\sqrt{f}(\widetilde{\mathbf{X}}(t))\cdot v\right|+\varepsilon^2C_fC_{B,\alpha,t}\|v\|^2,
\endaligned\end{equation*}
where $\alpha_1=1$ and  $C_{B,\alpha,t}$ is defined in the proof of Proposition \ref{prop-BE-3}. It follows by Minkowski's inequality,
\begin{align*}%\label{eq-sqrt}%
&\left(\mathbb{E}^{(x,\widetilde{x})}\left|\sqrt{f}(\mathbf{X}_t)-\sqrt{f}(\widetilde{\mathbf{X}}_t)\right|^2\right)^{\frac12}\\
&\le \varepsilon\left(\mathbb{E}^{(x,\widetilde{x})}\left|\sum_{i'=1}^{r+1}\left(\sum_{i=0}^{i'-1}\alpha_{i'-i}\frac{t^i}{i!} \prod_{j=1}^{i'-1}B_{j}\right)\nabla^{(i')}\sqrt{f}(\widetilde{\mathbf{X}}(t))\cdot v\right|^2\right)^{\frac12}+\varepsilon^2C_fC_{B,\alpha,t}\|v\|^2\nonumber.
\end{align*}
Thus
\begin{eqnarray} \label{eq-sem-3}
&&\left|\sqrt{P_tf}(x)-\sqrt{P_tf}(\widetilde{x})\right|\left(\sqrt{P_tf}(x)+\sqrt{P_tf}(\widetilde{x})\right)\nonumber\\
&=&\left|P_tf(x)-P_tf(\widetilde{x})\right|\nonumber\\
&=&\left|\mathbb{E}^{(x,\widetilde{x})}[f(\mathbf{X}_t)-f(\widetilde{\mathbf{X}}_t)]\right|\nonumber\\
&\le& \mathbb{E}^{(x,\widetilde{x})}\left|f(\mathbf{X}_t)-f(\widetilde{\mathbf{X}}_t)\right|\nonumber\\
&= & \mathbb{E}^{(x,\widetilde{x})}\left|\left(\sqrt{f}(\mathbf{X}_t)-\sqrt{f}(\widetilde{\mathbf{X}}_t)\right)\left(\sqrt{f}(\mathbf{X}_t)+\sqrt{f}(\widetilde{\mathbf{X}}_t)\right)\right| \nonumber\\
&\le& \left(\mathbb{E}^{(x,\widetilde{x})}\left|\sqrt{f}(\mathbf{X}_t)-\sqrt{f}(\widetilde{\mathbf{X}}_t)\right|^2\right)^{\frac12}  \left(\mathbb{E}^{(x,\widetilde{x})}\left|\sqrt{f}(\mathbf{X}_t)+\sqrt{f}(\widetilde{\mathbf{X}}_t)\right|^2\right)^{\frac12}\nonumber\\
&\le& \left( \varepsilon\left(\mathbb{E}^{(x,\widetilde{x})}\left|\sum_{i'=1}^{r+1}\left(\sum_{i=0}^{i'-1}\alpha_{i'-i}\frac{t^i}{i!} \prod_{j=1}^{i'-1}B_{j}\right)\nabla^{(i')}\sqrt{f}(\widetilde{\mathbf{X}}(t))\cdot v\right|^2\right)^{\frac12}+\varepsilon^2C_fC_{B,\alpha,t}\|v\|^2\right)\nonumber\\
& & \cdot \left(\mathbb{E}^{(x,\widetilde{x})}\left|\sqrt{f}(\mathbf{X}_t)+\sqrt{f}(\widetilde{\mathbf{X}}_t)\right|^2\right)^{\frac12}.
\end{eqnarray}
where we use the H\"older inequality  in the second last inequality. It is easy to see that
\begin{align*}
\lim_{\varepsilon\to0}\mathbb{E}^{(x,\widetilde{x})}\left|\sqrt{f}(\mathbf{X}_t)+\sqrt{f}(\widetilde{\mathbf{X}}_t)\right|^2=4P_t\left(f\right)(x).
\end{align*}

Dividing out $\varepsilon \left(\sqrt{P_tf}(x)+\sqrt{P_tf}(\widetilde{x})\right)$ in the both side of \eqref{eq-sem-3} and letting $\varepsilon\to0$, it yields
\begin{align*}
&\left|\sum_{i'=1}^{r+1}\alpha_{i'}\prod_{j=1}^{i'-1}B_{j}\nabla^{(i')}\sqrt{P_tf}(x)\cdot v\right|\\
&=\limsup_{\varepsilon\to0}\frac{\left|\sqrt{P_tf}(x)-\sqrt{P_tf}(\widetilde{x})\right|}{\varepsilon}\\
&\le \left(P_t\left|\sum_{i'=1}^{r+1}\left(\sum_{i=0}^{i'-1}\alpha_{i'-i}\frac{t^i}{i!} \prod_{j=1}^{i'-1}B_{j}\right)\nabla^{(i')}\sqrt{f}\cdot v\right|^2\right)^{\frac12}(x).
\end{align*}
Similar to the proof of Proposition \ref{prop-BE}, we have
\begin{align*}
&\left\|\sum_{i'=1}^{r+1}\alpha_{i'}\prod_{j=1}^{i'-1}B_{j}\nabla^{(i')}\sqrt{P_tf}(x)\right\|_{A_0}\le \left(P_t\left\|\sum_{i'=1}^{r+1}\left(\sum_{i=0}^{i'-1}\alpha_{i'-i}\frac{t^i}{i!} \prod_{j=1}^{i'-1}B_{j}\right)\nabla^{(i')}\sqrt{f}\right\|^2_{A_0}\right)^{\frac12}(x).
\end{align*}
The desired result follows immediately.
\end{proof}

\begin{corollary}\label{BEln-cor-3}
Let $f\in C^2\left(\mathbb{R}^{m_0}\times\cdots\times\mathbb{R}^{m_r}\right)$ be a  positive, globally Lipschitz  and bounded function.  Then for any fixed $t\ge0$, $x=(x^{(1)},\cdots,x^{(r+1)})\in \mathbb{R}^{m_0}\times\cdots\times\mathbb{R}^{m_r}$,  we have
\begin{equation}\label{zero2-3}
2P_tf(x)\Gamma(\ln P_tf)(x)\le P_t\left(f\left\langle E(-t)AE^*(-t)\nabla \ln f,\nabla \ln f\right\rangle\right)(x).
\end{equation}
and
\begin{equation}\label{zero-t2-3}
P_tf(x)\left\langle E(t)AE^*(t)\nabla P_tf(x),\nabla P_tf(x)\right\rangle\le 2P_t\left(f\Gamma(\ln f)\right)(x).
\end{equation}
\end{corollary}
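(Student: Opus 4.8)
The plan is to deduce both inequalities from Proposition \ref{prop-BEln-3} by exactly the same two specializations of the free parameters $\alpha_k$ that were used to pass from Proposition \ref{prop-BE-3} to Corollary \ref{BE2-cor-3}, and then to rewrite the resulting $A_0$-weighted sums using the algebraic identity \eqref{iden}. The only two facts I will use repeatedly are that $2\Gamma(g)=\|\nabla^{(1)}g\|_{A_0}^2$ for $g\in C^1$ (since $A$ has only the $A_0$-block), and the identity \eqref{iden}, which I will apply to $g=\ln f$ and to $g=\ln P_tf$. Everything analytic has already been done in Proposition \ref{prop-BEln-3}; what remains is purely the bookkeeping of choosing $\alpha_k$ and contracting the sums.

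For the right inequality \eqref{zero2-3}, I would set $\alpha_1=1$ and $\alpha_k=0$ for $2\le k\le r+1$ in \eqref{BEln-3}. On the left-hand side only the $i'=1$ term survives, so it collapses to $P_tf(x)\,\|\nabla^{(1)}\ln P_tf(x)\|_{A_0}^2=2P_tf(x)\,\Gamma(\ln P_tf)(x)$. On the right-hand side the inner weighted combination becomes $\sum_{k=0}^r\frac{t^k}{k!}\prod_{j=1}^kB_j\nabla^{(k+1)}\ln f$, and applying the identity \eqref{iden} with $g=\ln f$ turns its squared $A_0$-norm into $\langle E(-t)AE^*(-t)\nabla\ln f,\nabla\ln f\rangle$. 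This yields precisely \eqref{zero2-3}.

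For the reverse inequality \eqref{zero-t2-3}, I would take the same choice as in Corollary \ref{BE2-cor-3}, namely $\alpha_k=(-1)^{k-1}\frac{t^{k-1}}{(k-1)!}$ for $2\le k\le r+1$ (and $\alpha_1=1$), which forces $\sum_{i=0}^{i'-1}\alpha_{i'-i}\frac{t^i}{i!}=0$ for every $2\le i'\le r+1$. Then the right-hand side of \eqref{BEln-3} keeps only the $i'=1$ term and reduces to $P_t\!\left(f\,\|\nabla^{(1)}\ln f\|_{A_0}^2\right)(x)=2P_t(f\Gamma(\ln f))(x)$. On the left-hand side I would recognize the surviving sum as $\sum_{i=1}^{r+1}\frac{(-1)^{i-1}t^{i-1}}{(i-1)!}\prod_{j=1}^{i-1}B_j\nabla^{(i)}\ln P_tf=\sum_{k=0}^r\frac{(-t)^k}{k!}\prod_{j=1}^kB_j\nabla^{(k+1)}\ln P_tf$, and then apply the identity \eqref{iden} with $t$ replaced by $-t$ (so $E(-t)$ becomes $E(t)$) to $g=\ln P_tf$, converting its squared $A_0$-norm into $\langle E(t)AE^*(t)\nabla\ln P_tf,\nabla\ln P_tf\rangle$. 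Combining these gives the reverse estimate \eqref{zero-t2-3}.

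I expect no serious obstacle here: the content is a direct corollary, and the substance lives entirely in Proposition \ref{prop-BEln-3}. The only points requiring care are verifying that the coefficients $\alpha_k=(-1)^{k-1}t^{k-1}/(k-1)!$ indeed annihilate the weighted sums for all $i'\ge2$ (a finite convolution/binomial cancellation), and correctly tracking the $t\mapsto-t$ substitution in \eqref{iden} so that the reverse inequality carries $E(t)AE^*(t)$ rather than $E(-t)AE^*(-t)$. A minor cosmetic check is the passage between $\nabla\ln P_tf$ and $\nabla P_tf$ via $\nabla\ln P_tf=\nabla P_tf/P_tf$ when matching the final form of \eqref{zero-t2-3}.
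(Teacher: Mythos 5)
Your proposal is correct and follows essentially the same route as the paper's own proof: the paper likewise sets $\alpha_k=0$ ($2\le k\le r+1$) in \eqref{BEln-3} and invokes the identity \eqref{iden} to obtain \eqref{zero2-3}, and takes $\alpha_k=(-1)^{k-1}\frac{t^{k-1}}{(k-1)!}$ ($2\le k\le r+1$) for the reverse inequality. One remark: your derivation (correctly) yields $P_tf\left\langle E(t)AE^*(t)\nabla \ln P_tf,\nabla \ln P_tf\right\rangle$ on the left-hand side, i.e.\ $\frac{1}{P_tf}\left\langle E(t)AE^*(t)\nabla P_tf,\nabla P_tf\right\rangle$, so the expression $P_tf\left\langle E(t)AE^*(t)\nabla P_tf,\nabla P_tf\right\rangle$ printed in \eqref{zero-t2-3} is a misprint (under $f\mapsto cf$ the two sides of the printed inequality would scale as $c^3$ and $c$, and the paper's own use of it in \eqref{RLSI-0-3} carries $\nabla\ln P_tf$); the point you flagged as a ``cosmetic check'' is therefore a typo in the statement, not a gap in your argument.
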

\begin{proof} The proof is similar to the one of Corollary \ref{BE2-cor-3}. Let $\alpha_k=0\  (2\le k\le r+1)$ in \eqref{BEln-3}, then
$$
2P_tf(x)\Gamma(\ln P_tf)(x)\le P_t\left(f\left\|\sum_{k=0}^r\frac{t^k}{k!}\prod_{j=1}^kB_j\nabla^{(k+1)}\ln f\right\|_{A_0}^2\right)(x).
$$
Together with \eqref{iden}, we have \eqref{zero2-3} follows immediately.
 For the reverse inequality \eqref{zero-t2-3}, we only need to take $\alpha_k=(-1)^{k-1}\frac{t^{k-1}}{(k-1)!}, \ 2\le k\le r+1$.
\end{proof}

\section{Applications to functional inequalities and Liouville property}\label{sec-3}

\begin{theorem}[Poincar\'e inequality and reverse Poincar\'e inequality]\label{PI-3}
Let $f\in C^1\left(\mathbb{R}^{m_0}\times\cdots\times\mathbb{R}^{m_r}\right)$ be a globally Lipschitz bounded function. Then for any $t\ge0$, $x=(x^{(1)},\cdots,x^{(r+1)})\in \mathbb{R}^{m_0}\times\cdots\times\mathbb{R}^{m_r}$,  we have
\begin{align}\label{PI-0-3}
P_t(f^2)-(P_tf)^2&\le P_t\left(\left\langle \int_0^tE(-(t-s))AE^*(-(t-s))ds\nabla f,\nabla f\right\rangle\right).
\end{align}
For $t>0$, the following reverse Poincar\'e inequality holds for any bounded function $f$:
\begin{align}\label{RPI-0-3}
P_t(f^2)-(P_tf)^2&\ge \left\langle \mathcal{C}(t)\nabla P_tf,\nabla P_tf\right\rangle.
\end{align}
Where $E(t)=\exp(-tB^*)$ and  $\mathcal{C}(t)$ is defined in \eqref{C-M}.
\end{theorem}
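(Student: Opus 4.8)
The plan is to interpolate along the semigroup between the two endpoints $(P_tf)^2$ and $P_t(f^2)$, differentiate to convert the difference into a time integral of $\Gamma(P_{t-s}f)$, and then feed the two Bakry-\'Emery type estimates of Corollary \ref{BE2-cor-3} into that integral to obtain the upper and lower bounds respectively. First I would fix $t>0$ and introduce the interpolation
\[
\phi(s)=P_s\left((P_{t-s}f)^2\right),\qquad s\in[0,t],
\]
so that $\phi(0)=(P_tf)^2$ and $\phi(t)=P_t(f^2)$. Writing $u_s=P_{t-s}f$, which satisfies $\partial_s u_s=-\mathcal{L}u_s$, the product rule for the semigroup gives
\[
\phi'(s)=P_s\left(\mathcal{L}(u_s^2)-2u_s\mathcal{L}u_s\right)=2P_s\left(\Gamma(P_{t-s}f)\right),
\]
using the definition of $\Gamma$. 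Integrating in $s$ yields the key identity
\[
P_t(f^2)-(P_tf)^2=2\int_0^t P_s\left(\Gamma(P_{t-s}f)\right)\,ds.
\]

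For the right Poincar\'e inequality \eqref{PI-0-3} I would insert into each integrand the right Bakry-\'Emery estimate \eqref{zero-3}, applied to $f$ at time $t-s$, namely $2\Gamma(P_{t-s}f)\le P_{t-s}\big(\langle E(-(t-s))AE^*(-(t-s))\nabla f,\nabla f\rangle\big)$. Applying $P_s$, using the semigroup law $P_sP_{t-s}=P_t$, and exchanging the $s$-integral with $P_t$ and with the quadratic form (both linear operations) produces exactly the claimed upper bound, with the matrix $\int_0^t E(-(t-s))AE^*(-(t-s))\,ds$ appearing inside the inner product.

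For the reverse inequality \eqref{RPI-0-3} the same identity is bounded from below. Here I would apply the reverse estimate \eqref{zero-t-3} at time $s$ to the function $P_{t-s}f$, together with $P_s(P_{t-s}f)=P_tf$, to get $\langle E(s)AE^*(s)\nabla P_tf,\nabla P_tf\rangle\le 2P_s(\Gamma(P_{t-s}f))$. Integrating over $s\in[0,t]$ and pulling the integral inside the inner product turns the right-hand side into $2\int_0^t P_s(\Gamma(P_{t-s}f))\,ds=P_t(f^2)-(P_tf)^2$, while the left-hand side becomes $\langle\int_0^t E(s)AE^*(s)\,ds\,\nabla P_tf,\nabla P_tf\rangle=\langle\mathcal{C}(t)\nabla P_tf,\nabla P_tf\rangle$ by the very definition \eqref{C-M} of $\mathcal{C}(t)$.

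I expect the only genuine subtlety to be the analytic justification of the endpoint computation of $\phi'(s)$: differentiating $P_s((P_{t-s}f)^2)$ and commuting $\tfrac{d}{ds}$ through $P_s$. For globally Lipschitz bounded $f$ this is legitimate because the Gaussian kernel makes $P_s$ smoothing, so $P_{t-s}f$ is smooth in $x$ with controlled derivatives and $C^1$ in $s$, and the exchanges of the $s$-integral with $P_t$ and with the constant-coefficient quadratic forms are then routine by Fubini and linearity. The extension of the reverse inequality to merely bounded $f$ for $t>0$ follows by the same smoothing property together with an approximation argument.
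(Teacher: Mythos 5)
Your proposal is correct and follows essentially the same route as the paper: both rest on the interpolation identity $P_t(f^2)-(P_tf)^2=2\int_0^t P_s\Gamma(P_{t-s}f)\,ds$ (which the paper simply cites from Bakry--Gentil--Ledoux, while you derive it via $\phi(s)=P_s((P_{t-s}f)^2)$), then insert \eqref{zero-3} at time $t-s$ together with $P_sP_{t-s}=P_t$ for the upper bound, and \eqref{zero-t-3} applied to $P_{t-s}f$ at time $s$ together with $P_s(P_{t-s}f)=P_tf$ and the definition \eqref{C-M} of $\mathcal{C}(t)$ for the lower bound. Your added remarks on justifying $\phi'(s)$ and on the approximation argument needed to pass from Lipschitz to merely bounded $f$ in the reverse inequality are sound and, if anything, slightly more careful than the paper's own exposition.
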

\begin{proof}
It follow from \eqref{zero-3},
\begin{align*}
P_t(f^2)-(P_tf)^2&=2\int_0^tP_s\Gamma(P_{t-s}f)ds\\
&\le \int_0^tP_t\left(\left\langle E(-(t-s))AE^*(-(t-s))\nabla f,\nabla f\right\rangle\right)ds\\
&=P_t\left(\left\langle \int_0^tE(-(t-s))AE^*(-(t-s))ds\nabla f,\nabla f\right\rangle\right).
\end{align*}
where the first identity follows from   \cite[Page 207]{BGL14}. Hence \eqref{PI-0-3} follows. Now we use the reverse Bakry-\'Emery type  inequality \eqref{zero-t-3},
\begin{align*}
P_t(f^2)-(P_tf)^2&\ge \int_0^t\left\langle E(s)AE^*(s)\nabla P_tf,\nabla P_tf\right\rangle ds\\
&=\left\langle\int_0^t E(s)AE^*(s)ds\nabla P_tf,\nabla P_tf\right\rangle\\
&=\left\langle \mathcal{C}(t)\nabla P_tf,\nabla P_tf\right\rangle.
\end{align*}
Hence we complete the proof.
\end{proof}

We can rewrite respectively  \eqref{PI-0-3} and \eqref{RPI-0-3}  as \begin{align*}P_t(f^2)-(P_tf)^2&\le
\sum_{k=0}^r\frac{t^{2k+1}}{(k!)^2(2k+1)}P_t\left(\left\|\prod_{j=1}^kB_j\nabla^{(k+1)}f\right\|_{A_0}\right)\\
&+2\sum_{0\le k_1<k_2\le r}\frac{t^{k_1+k_2+1}}{k_1!k_2!(k_1+k_2+1)}P_t\left(\left\langle A_0\prod_{j=1}^{k_1}B_j\nabla^{(k_1+1)}f,\prod_{j=1}^{k_2}B_j\nabla^{(k_2+1)}f\right\rangle\right).\nonumber
\end{align*}
and
\begin{align*}
P_t(f^2)-(P_tf)^2&\ge\sum_{k=0}^r\frac{t^{2k+1}}{(k!)^2(2k+1)}\left\|\prod_{j=1}^kB_j\nabla^{(k+1)}P_tf\right\|_{A_0} \\
&+2\sum_{0\le k_1<k_2\le r}\frac{(-1)^{k_1+k_2}t^{k_1+k_2+1}}{k_1!k_2!(k_1+k_2+1)}\left\langle A_0\prod_{j=1}^{k_1}B_j\nabla^{(k_1+1)}P_tf,\prod_{j=1}^{k_2}B_j\nabla^{(k_2+1)}P_tf\right\rangle.\nonumber
\end{align*}

\begin{theorem}[Logarithmic Sobolev inequality and reverse Logarithmic Sobolev inequality]\label{thm-LSI-3}
Let $f$ be a   positive, globally Lipschitz and bounded $C^2\left(\mathbb{R}^{m_0}\times\cdots\times\mathbb{R}^{m_r}\right)$ function. Then for any fixed $t\ge0$, $x=(x^{(1)},\cdots,x^{(r+1)})\in \mathbb{R}^{m_0}\times\cdots\times\mathbb{R}^{m_r}$,  we have
\begin{align}\label{LSI-0-3}
\hskip0ptP_t(f\ln f)-(P_tf)\ln P_tf&\le \frac12P_t\left(f\left\langle \int_0^tE(-(t-s))AE^*(-(t-s))ds\nabla \ln f,\nabla \ln f\right\rangle\right).
\end{align}
For $t>0$, the following reverse Logarithmic Sobolev inequality  holds for any positive bounded function $f$:
\begin{align}\label{RLSI-0-3}
P_t(f\ln f)-P_tf\ln P_tf&\ge \frac12P_tf\left\langle \mathcal{C}(t)\nabla\ln P_tf,\nabla \ln P_tf\right\rangle,
\end{align}
where $\mathcal{C}(t)$ defined in \eqref{C-M} is a positive matrix.
\end{theorem}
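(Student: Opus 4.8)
The plan is to derive the logarithmic Sobolev inequalities by the standard interpolation (semigroup) argument, exactly parallel to the proof of Theorem~\ref{PI-3}, but now tracking the functional $\phi(s) = P_s\bigl((P_{t-s}f)\ln(P_{t-s}f)\bigr)$ and using the reverse Bakry-\'Emery estimates of Corollary~\ref{BEln-cor-3} in place of those of Corollary~\ref{BE2-cor-3}. The endpoint values are $\phi(0)=(P_tf)\ln P_tf$ and $\phi(t)=P_t(f\ln f)$, so the total entropy $P_t(f\ln f)-P_tf\ln P_tf$ equals $\int_0^t \phi'(s)\,ds$. A direct computation using $\mathcal{L}(g\ln g)-(1+\ln g)\mathcal{L}g = \Gamma(g)/g$ shows that
\begin{equation*}
\phi'(s)=P_s\!\left(\frac{\Gamma(P_{t-s}f)}{P_{t-s}f}\right)=P_s\bigl((P_{t-s}f)\,\Gamma(\ln P_{t-s}f)\bigr),
\end{equation*}
which is the entropy analogue of the identity $P_t(f^2)-(P_tf)^2=2\int_0^t P_s\Gamma(P_{t-s}f)\,ds$ cited from \cite[Page~207]{BGL14}. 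This differentiation step is the structural heart of the argument and should be stated first.

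\emph{For the forward inequality} \eqref{LSI-0-3}, I would apply the right estimate \eqref{zero2-3} to the inner function, writing $g=P_{t-s}f$ and bounding
\begin{equation*}
2 g\,\Gamma(\ln g)\le P_{t-s}\!\left(f\bigl\langle E(-(t-s))AE^*(-(t-s))\nabla\ln f,\nabla\ln f\bigr\rangle\right),
\end{equation*}
then using the semigroup property $P_s\circ P_{t-s}=P_t$ to pull everything out as a single $P_t$. Integrating in $s$ and computing $\int_0^t E(-(t-s))AE^*(-(t-s))\,ds$ (a change of variable reduces this to $\int_0^t E(-u)AE^*(-u)\,du$) yields the claimed kernel, with the factor $\tfrac12$ coming from the definition of $\Gamma$. \emph{For the reverse inequality} \eqref{RLSI-0-3}, I instead insert the reverse estimate \eqref{zero-t2-3}, which after multiplying through by the Jacobian-type factor $P_{t-s}f/P_{t-s}f$ gives a lower bound on $\phi'(s)$ of the form $\langle E(s)AE^*(s)\nabla P_tf,\nabla P_tf\rangle$-type quantities; integrating over $[0,t]$ assembles $\int_0^t E(s)AE^*(s)\,ds=\mathcal{C}(t)$ by the very definition \eqref{C-M}, and factoring out $P_tf$ (via $\Gamma(\ln u)=\Gamma(u)/u^2$) produces \eqref{RLSI-0-3}.

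\textbf{The main obstacle} will be the reverse direction, specifically making the factoring-out of $P_tf$ rigorous and matching the weighted quantity $P_s((P_{t-s}f)\,\Gamma(\ln P_{t-s}f))$ against the lower bound \eqref{zero-t2-3}, whose left-hand side involves $P_{t-s}f$ evaluated at an interior point rather than the endpoint $P_tf$. The clean telescoping that works in the Poincar\'e case relies on the fact that \eqref{zero-t-3} bounds a quantity at time $t$ below by $P_t(\Gamma(f))$; here one must verify that applying \eqref{zero-t2-3} at each intermediate time $s$, combined with the monotonicity inherent in the reverse estimate and the commutation $\nabla P_{t-s}$ versus $P_{t-s}\nabla$, genuinely accumulates to the single matrix $\mathcal{C}(t)$ acting on $\nabla\ln P_tf$. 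I would handle this by first establishing \eqref{RLSI-0-3} for $f$ bounded below by $\delta>0$ with bounded derivatives (so all logarithms and the interpolation are legitimate), then removing the regularity by the approximation $f\rightsquigarrow f+\delta$ and $\delta\to0$ as already used in the proof of Proposition~\ref{prop-BEln-3}.
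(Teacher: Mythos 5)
Your proposal is correct and follows essentially the same route as the paper: the semigroup interpolation identity $P_t(f\ln f)-(P_tf)\ln P_tf=\int_0^t P_s\bigl(P_{t-s}f\,\Gamma(\ln P_{t-s}f)\bigr)\,ds$ (which the paper cites from \cite[Proposition 5.5.3]{BGL14} and you derive by differentiating $\phi$), followed by inserting \eqref{zero2-3} at time $t-s$ for the forward bound and \eqref{zero-t2-3} at time $s$ applied to $P_{t-s}f$ for the reverse bound, then integrating to assemble $\mathcal{C}(t)$. The obstacle you flag in the reverse direction is not actually an issue: applying \eqref{zero-t2-3} with the pair $(s, P_{t-s}f)$ makes its left-hand side involve $P_s(P_{t-s}f)=P_tf$ directly by the semigroup property, so the quantities telescope exactly as in the Poincar\'e case (note also that \eqref{zero-t2-3} should read $\nabla\ln P_tf$ rather than $\nabla P_tf$, as its own proof via \eqref{iden} shows, which is precisely the form your $\Gamma(\ln u)=\Gamma(u)/u^2$ bookkeeping recovers).
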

\begin{proof}
%%Having  Corollary \ref{BEln-cor-3} in hand,  the proof is similar to the one of Theorem \ref{thm-LSI}.
It follow from \eqref{zero2-3},
\begin{align*}
P_t(f\ln f)-(P_tf)\ln P_tf&=\int_0^tP_s\left(P_{t-s}f\Gamma(\ln P_{t-s}f)\right)ds\\
&\le \frac12\int_0^tP_t\left(f\left\langle E(-(t-s))AE^*(-(t-s))\nabla \ln f,\nabla \ln f\right\rangle\right)ds,
\end{align*}
where the first identity follows from \cite[Proposition 5.5.3]{BGL14}.  Hence \eqref{LSI-0-3} follows.

For the reverse Logarithmic Sobolev inequality \eqref{RLSI-0-3}, we only need to use the reverse inequality \eqref{zero-t2-3},
\begin{align*}
P_t(f\ln f)&-(P_tf)\ln P_tf=\int_0^tP_s\left(P_{t-s}f\Gamma(\ln P_{t-s}f)\right)ds\\
&\ge \frac12P_tf\int_0^t\langle E(t-s)AE^*(t-s)ds\nabla P_tf,\nabla P_tf\rangle ds.
\end{align*}
\eqref{RLSI-0-3} follows easily.
\end{proof}

\begin{theorem}\label{thm-Wang-3}
Let $f$ be a non-negative Borel bounded function on $\mathbb{R}^{m_0}\times\cdots\times\mathbb{R}^{m_r}$.
Then for every $t>0, x,y\in \mathbb{R}^{m_0}\times\cdots\times\mathbb{R}^{m_r} $ and $\alpha>1$, we have
$$
\left(P_tf\right)^{\alpha}(x)\le C_{\alpha}(t,x,y)P_tf^{\alpha}(y)
$$
with $C_{\alpha}=\exp\left(\frac{\alpha}{2(\alpha-1)}\left\langle \mathcal{C}^{-1}(t)(y-x),y-x\right\rangle\right)$,   $\mathcal{C}^{-1}(t)$ denotes the inverse matrix of the positive definite matrix $\mathcal{C}(t)$.

\end{theorem}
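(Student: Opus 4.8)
The plan is to derive this power-type (Wang--Harnack) inequality from the reverse logarithmic Sobolev inequality \eqref{RLSI-0-3} by interpolating simultaneously along the line segment joining $x$ and $y$ and along the exponent from $1$ to $\alpha$. First I would reduce to the case where $f$ is strictly positive, bounded, $C^2$ and globally Lipschitz, so that \eqref{RLSI-0-3} applies and $P_t(f^{p})$ is smooth in the space variable (the Gaussian kernel in \eqref{C-M} guaranteeing this regularity). The general non-negative Borel bounded $f$ is then recovered by applying the estimate to a mollified version of $f+\delta$ and letting $\delta\downarrow 0$, using that both sides converge.

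For the core estimate, set $\gamma(s)=x+s(y-x)$ and $p(s)=1+(\alpha-1)s$ for $s\in[0,1]$, and define $\psi(s)=\frac{1}{p(s)}\ln P_t\big(f^{p(s)}\big)(\gamma(s))$. The boundary values are $\psi(0)=\ln P_tf(x)$ and $\psi(1)=\frac{1}{\alpha}\ln P_tf^{\alpha}(y)$, so the claimed inequality is equivalent to $\psi(0)-\psi(1)\le \frac{1}{2(\alpha-1)}\langle \mathcal{C}^{-1}(t)(y-x),y-x\rangle$. Next I would compute $\psi'(s)$ by the chain rule: writing $g=f^{p(s)}$ and $F=P_tg(\gamma(s))$, the $s$-derivative of the exponent produces the normalized entropy $\frac{p'}{p^2}\cdot\frac{1}{F}\big(P_t(g\ln g)-F\ln F\big)$, while moving the base point $\gamma(s)$ produces the gradient term $\frac{1}{p}\langle \nabla\ln P_tg(\gamma(s)),\,y-x\rangle$.

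The key step is to insert \eqref{RLSI-0-3}, which bounds the entropy term below by $\frac{p'}{2p^2}\langle \mathcal{C}(t)w,w\rangle$ with $w=\nabla\ln P_tg(\gamma(s))$, so that
$$\psi'(s)\ge \frac{p'(s)}{2p(s)^2}\langle \mathcal{C}(t)w,w\rangle+\frac{1}{p(s)}\langle w,\,y-x\rangle.$$
Since the right-hand side is a convex quadratic in the vector $w$, I would minimize it pointwise (the minimizer being $w=-\tfrac{p}{p'}\mathcal{C}^{-1}(t)(y-x)$), which gives $\psi'(s)\ge -\frac{1}{2p'(s)}\langle \mathcal{C}^{-1}(t)(y-x),y-x\rangle$. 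With the linear choice $p'(s)\equiv \alpha-1$, integrating over $s\in[0,1]$ yields $\psi(0)-\psi(1)\le \frac{1}{2(\alpha-1)}\langle \mathcal{C}^{-1}(t)(y-x),y-x\rangle$, and multiplying by $\alpha$ and exponentiating reproduces exactly $C_\alpha(t,x,y)$. The main obstacle I anticipate is not the optimization (the routine heart of the argument) but the justification of differentiating $\psi$ --- interchanging $\partial_s$ with $P_t$ and with the spatial gradient --- together with the dominated-convergence control needed for the final $\delta\downarrow 0$ passage from the admissible class down to arbitrary non-negative bounded Borel $f$; both rely on the explicit Gaussian form of the heat kernel and the positive definiteness of $\mathcal{C}(t)$ from \eqref{C-M}.
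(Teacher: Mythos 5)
Your proposal is correct and follows essentially the same route as the paper: the paper's proof also interpolates along $\gamma(s)=x+s(y-x)$ with exponent $\beta(s)=1+(\alpha-1)s$, defines $\phi(s)=\frac{\alpha}{\beta(s)}\ln P_tf^{\beta(s)}(\gamma(s))$ (your $\psi$ rescaled by $\alpha$), inserts the reverse logarithmic Sobolev inequality \eqref{RLSI-0-3} into $\phi'(s)$, and bounds the resulting expression below via Cauchy--Schwarz and $ax^2-bx\ge -b^2/(4a)$, which is exactly your pointwise minimization of the convex quadratic in $w$. The only differences are cosmetic (your direct minimization versus the paper's two-step Cauchy--Schwarz argument, and slightly different wording of the reduction to nice $f$).
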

\begin{proof}
The proof is similar to the one of Theorem 3.11 in \cite{QZ1}. We prove it here for readers' convenience. Without loss of any generality, we can assume $f$ is non-negative and rapidly decreasing. Let $t$ be fixed. For any $x,y\in \mathbb{R}^{m_0 \times\cdots \times m_r }$, consider the curve $$\gamma(s)=x+s(y-x)$$ such that  $\gamma(0)=x,\gamma(1)=y$ and $\beta(s)=1+(\alpha-1)s, 0\le s\le 1$. Let
$$
\phi(s):=\frac{\alpha}{\beta(s)}\ln P_tf^{\beta(s)}(\gamma(s)), \  0\le s\le 1.
$$
  Differentiating with respect to $s$  and using \eqref{RLSI-0-3} yields
\begin{align*}
\phi'(s)&=\frac{\alpha(\alpha-1)}{\beta^2(s)}\frac{P_t(f^{\beta(s)}\ln f^{\beta(s)})-(P_tf^{\beta(s)})\ln P_tf^{\beta(s)} }{P_tf^{\beta(s)}}-\frac{\alpha}{\beta(s)}\langle\nabla \ln P_tf^{\beta(s)},y-x\rangle\\
&\ge \frac{\alpha(\alpha-1)}{2\beta^2(s)}\langle \mathcal{C}(t)\nabla\ln P_tf^{\beta(s)},\nabla\ln P_tf^{\beta(s)}\rangle-\frac{\alpha}{\beta(s)}\sqrt{\langle \mathcal{C}(t) \nabla\ln P_tf^{\beta(s)},\nabla\ln P_tf^{\beta(s)}\rangle}\sqrt{\langle \mathcal{C}^{-1}(t)(y-x),y-x\rangle}\\
&\ge -\frac{\alpha}{2(\alpha-1)}\langle \mathcal{C}^{-1}(t)(y-x),y-x\rangle,
\end{align*}
where we use  the inequality $\langle x,y\rangle\le \sqrt{\langle \mathcal{C}(t)x,x\rangle}\sqrt{\langle \mathcal{C}^{-1}(t)y,y\rangle}$ for any $x,y\in\mathbb{R}^{m_0+\cdots+m_r}$ and for any positive matrix $\mathcal{C}(t)$ in the second last inequality and use the elementary inequality $ax^2-bx\ge -\frac{b^2}{4a}$ for $a>0$ in the last one.

Integrating $s$ from $0$ to $1$ yields
$$
\ln P_tf^{\alpha}(y)-\ln (P_tf)^{\alpha}(x)\ge -\frac{\alpha}{2(\alpha-1)}\langle \mathcal{C}^{-1}(t)(y-x),y-x\rangle.
$$
The desired result follows.
\end{proof}

We also have the following Hamilton's elliptic gradient estimate by the reverse logarithmic Sobolev inequality.
\begin{proposition}\label{Ham3}
Let $u$ be the positive  bounded solution to the hypoelliptic heat equation $\mathcal{L} u(x,t)=\partial_tu(x,t)$ with the initial value  $0\le f\le C$ ($C$ is a constant), we have for any  $x\in\mathbb{R}^{m_0}\times\cdots \times\mathbb{R}^{m_r}$ and $t>0$, \begin{equation}\label{Ham-eq3}
\frac12\langle \mathcal{C}(t)\nabla \ln u,\nabla \ln u\rangle\le \ln \frac{C}{u}.
\end{equation}

\end{proposition}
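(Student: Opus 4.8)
The plan is to read \eqref{Ham-eq3} directly off the reverse logarithmic Sobolev inequality \eqref{RLSI-0-3}, using that the positive bounded solution of $\mathcal{L}u=\partial_tu$ with initial datum $f$ is precisely $u(\cdot,t)=P_tf$. First I would apply \eqref{RLSI-0-3} to $f$ and divide both sides by $P_tf=u>0$, obtaining
\begin{equation*}
\frac12\langle \mathcal{C}(t)\nabla\ln u,\nabla\ln u\rangle\le \frac{P_t(f\ln f)}{P_tf}-\ln P_tf.
\end{equation*}
The remaining task is then to bound the right-hand side by $\ln C-\ln u=\ln\frac{C}{u}$, i.e. to show $P_t(f\ln f)\le (\ln C)\,P_tf$.

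The key observation is the pointwise inequality coming from the hypothesis $0\le f\le C$: with the usual convention $0\ln 0=0$, one has $f\ln f\le f\ln C$ everywhere (for $0<f\le C$ this is just $\ln f\le\ln C$, and at $f=0$ both sides vanish). Since $P_t$ is positivity-preserving and $P_t1=1$, applying $P_t$ gives $P_t(f\ln f)\le (\ln C)P_tf=(\ln C)u$, hence $\tfrac{P_t(f\ln f)}{P_tf}\le \ln C$. Substituting into the displayed inequality yields
\begin{equation*}
\frac12\langle \mathcal{C}(t)\nabla\ln u,\nabla\ln u\rangle\le \ln C-\ln u=\ln\frac{C}{u},
\end{equation*}
which is exactly \eqref{Ham-eq3}. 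Note the left side is well defined precisely because $\mathcal{C}(t)$ is positive definite for $t>0$ (Section \ref{sec-1}) and $u>0$ makes the division legitimate.

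The main point requiring care, rather than a genuine obstacle, is that \eqref{RLSI-0-3} is stated for \emph{positive} bounded functions, whereas the initial datum $f$ is only assumed non-negative and may vanish. The clean fix is the standard $\delta$-shift used throughout Section \ref{sec-2}: I would apply the argument above to $f+\delta$ with $\delta>0$. Since $P_t(f+\delta)=u+\delta$ and $f+\delta\le C+\delta$, the same two steps give
\begin{equation*}
\frac12\langle \mathcal{C}(t)\nabla\ln(u+\delta),\nabla\ln(u+\delta)\rangle\le \ln\frac{C+\delta}{u+\delta},
\end{equation*}
and letting $\delta\to0$ recovers \eqref{Ham-eq3} because $u>0$ ensures $\nabla\ln(u+\delta)\to\nabla\ln u$ and the right-hand side converges to $\ln\frac{C}{u}$. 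All the analytic substance sits in \eqref{RLSI-0-3}, which is already established; the rest is bookkeeping.
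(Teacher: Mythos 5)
Your proof is correct and follows essentially the same route as the paper's: apply the reverse logarithmic Sobolev inequality \eqref{RLSI-0-3} with $u=P_tf$, bound $P_t(f\ln f)\le (\ln C)P_tf$ using $f\le C$, and divide by $u>0$. You are in fact slightly more careful than the paper, which applies \eqref{RLSI-0-3} directly to a merely non-negative initial datum even though that inequality is stated for positive functions; your $\delta$-shift argument closes this small gap cleanly.
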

\begin{proof} It is known that $u(x,t)=P_tf(x)$. Then by the reverse logarithmic Sobolev inequality \eqref{RLSI-0-3},
$$
\frac12u\langle \mathcal{C}(t)\nabla \ln u,\nabla \ln u\rangle\le  u\ln(C)-u\ln u.
$$
The desired result follows.
\end{proof}

\begin{corollary}\label{harn-ham3}
Under the assumption of Proposition \ref{Ham3}, we have for $t>0$, $\alpha>1$ and any  $x,y\in\mathbb{R}^{m_0}\times\cdots\times \mathbb{R}^{m_r}$,
$$
u^{\alpha}(x,t)\le u(y,t)C^{\alpha-1}e^{\frac{\alpha}{2(\alpha-1)}\langle \mathcal{C}^{-1}(t) (y-x),y-x \rangle}.
$$

\end{corollary}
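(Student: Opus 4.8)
The plan is to deduce the claim directly from the Wang--Harnack inequality of Theorem \ref{thm-Wang-3} together with the pointwise bound on the initial datum, so that no fresh analysis is needed. First I would recall from Proposition \ref{Ham3} that $u(\cdot,t)=P_tf$ with $0\le f\le C$, and that $f$ is in particular a non-negative bounded Borel function, so the hypotheses of Theorem \ref{thm-Wang-3} are met. Applying that theorem with the same $t$, the same $\alpha>1$, and the two points $x,y$ gives
\begin{equation*}
u^{\alpha}(x,t)=(P_tf)^{\alpha}(x)\le \exp\left(\frac{\alpha}{2(\alpha-1)}\left\langle \mathcal{C}^{-1}(t)(y-x),y-x\right\rangle\right)P_t(f^{\alpha})(y).
\end{equation*}

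The only remaining step is to replace $P_t(f^{\alpha})(y)$ by $C^{\alpha-1}u(y,t)$. Here I would use $0\le f\le C$ to write $f^{\alpha}=f\cdot f^{\alpha-1}\le C^{\alpha-1}f$ pointwise; since $P_t$ is a Markov semigroup with non-negative kernel it is order preserving, whence $P_t(f^{\alpha})(y)\le C^{\alpha-1}P_tf(y)=C^{\alpha-1}u(y,t)$. Substituting this into the displayed inequality yields exactly
\begin{equation*}
u^{\alpha}(x,t)\le u(y,t)\,C^{\alpha-1}\exp\left(\frac{\alpha}{2(\alpha-1)}\left\langle \mathcal{C}^{-1}(t)(y-x),y-x\right\rangle\right),
\end{equation*}
which is the assertion.

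Because Theorem \ref{thm-Wang-3} already packages the hard part (the reverse logarithmic Sobolev inequality \eqref{RLSI-0-3} integrated along the segment $\gamma(s)=x+s(y-x)$), there is no genuine obstacle here; the result is a two-line corollary, and the sole point requiring care is the order-preserving property of $P_t$ used in the bound $P_t(f^{\alpha})\le C^{\alpha-1}P_tf$. If one instead preferred a self-contained derivation from Hamilton's estimate \eqref{Ham-eq3}, the main technical point would be to integrate the differential inequality $|\tfrac{d}{ds}\ln u(\gamma(s),t)|\le \sqrt{2\ln(C/u)}\,\sqrt{\langle \mathcal{C}^{-1}(t)(y-x),y-x\rangle}$, obtained from the weighted Cauchy--Schwarz inequality $\langle\nabla\ln u,y-x\rangle\le\sqrt{\langle\mathcal{C}(t)\nabla\ln u,\nabla\ln u\rangle}\,\sqrt{\langle\mathcal{C}^{-1}(t)(y-x),y-x\rangle}$ together with \eqref{Ham-eq3}; but the Wang--Harnack route above is shorter, produces the precise constants, and is the one I would present.
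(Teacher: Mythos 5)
Your proof is correct, but it takes a genuinely different route from the paper's. The paper proves the corollary by the self-contained argument you sketch (and set aside) in your final paragraph: starting from Hamilton's estimate \eqref{Ham-eq3}, it integrates $\frac{d}{ds}\sqrt{\ln(C/u(\gamma(s),t))}$ along the segment $\gamma(s)=x+s(y-x)$, bounds this derivative by $\frac{1}{\sqrt{2}}\sqrt{\langle \mathcal{C}^{-1}(t)(y-x),y-x\rangle}$ using the weighted Cauchy--Schwarz inequality together with \eqref{Ham-eq3}, and then converts the resulting additive bound on $\sqrt{\ln(C/u)}$ into the stated multiplicative inequality via the elementary inequality $(a+b)^2\le \alpha a^2+\frac{\alpha}{\alpha-1}b^2$. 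Your route instead quotes Theorem \ref{thm-Wang-3} and removes the term $P_t(f^{\alpha})(y)$ through the pointwise bound $f^{\alpha}\le C^{\alpha-1}f$ and the order-preservation of $P_t$ (legitimate, since the kernel is Gaussian); the hypotheses of Theorem \ref{thm-Wang-3} are indeed met under the assumptions of Proposition \ref{Ham3}, and the constants come out identical. The trade-off: your argument is shorter and purely formal once Theorem \ref{thm-Wang-3} is available, but it leans on the Wang--Harnack inequality and the bound on the initial datum $f$; the paper's argument uses only the time-$t$ information in Hamilton's estimate (together with $u\le C$), which makes the classical point that a pointwise elliptic gradient estimate at a single time already yields a Harnack inequality with power --- this is why the corollary is stated ``under the assumption of Proposition \ref{Ham3}'' and derived from it rather than from Theorem \ref{thm-Wang-3}.
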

\begin{proof}
Let $t$ be fixed. For any $x,y\in \mathbb{R}^{m_0+\cdots+ m_r}$, consider the curve $$\gamma(s)=x+s(y-x), \ 0\le s\le 1$$ such that  $\gamma(0)=x,\gamma(1)=y$.
We have \begin{align*}
\sqrt{\ln\frac{C}{u}(y,t)}-\sqrt{\ln\frac{C}{u}(x,t)}&=\int_0^1\frac{d}{ds}\sqrt{\ln\frac{C}{u}(\gamma(s),t)}ds\\
&=\int_0^1\frac{\nabla \ln u(\gamma(s),t)\cdot\dot{\gamma}(s)}{2\sqrt{\ln\frac{C}{u}(\gamma(s),t)}}ds\\
&\le \int_0^1 \frac{\sqrt{\langle \mathcal{C}(t) \nabla \ln u,\nabla \ln u\rangle}\sqrt{\langle \mathcal{C}^{-1}(t) (y-x),y-x \rangle}}{2\sqrt{\ln\frac{C}{u}(\gamma(s),t)}} ds\\
&\le \frac{1}{\sqrt{2}} \sqrt{\langle \mathcal{C}^{-1}(t) (y-x),y-x \rangle},
\end{align*}
where we apply the Hamilton's gradient estimate \eqref{Ham-eq3} in the last inequality and in the second last inequality we apply the elementary inequality $\langle v_1,v_2\rangle\le\langle \mathcal{C}(t)v_1,v_1\rangle^{\frac12}\langle \mathcal{C}^{-1}(t)v_2,v_2\rangle^{\frac12}$  for any  $v_1,v_2\in\mathbb{R}^{m_0+\cdots+m_r}$ and the positive definite matrix  $\mathcal{C}(t)$.
Thus for any $\alpha>1$, applying the elementary inequality $(a+b)^2\le \alpha a^2+\frac{\alpha}{\alpha-1}b^2$ for $a,b\in \mathbb{R}$,
\begin{align*}
\ln\frac{C}{u}(y,t)&\le \left(\sqrt{\ln\frac{C}{u}(x,t)}+\frac{\sqrt{2}}2 \sqrt{\langle \mathcal{C}^{-1} (y-x),y-x \rangle}\right)^2\\
&\le \alpha \ln\frac{C}{u(x,t)}+\frac{\alpha}{2(\alpha-1)}\langle \mathcal{C}^{-1} (y-x),y-x \rangle.
\end{align*}
The desired result follows.
\end{proof}

As a consequence of Proposition \ref{Ham3}, the following Liouville property holds.
\begin{theorem}\label{Liou3}
Let $u$ be the positive  bounded solution to the hypoelliptic Laplacian equation $\mathcal{L} u(x)=0$.  Then $u$ must be a constant.
\end{theorem}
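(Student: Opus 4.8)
The plan is to deduce the Liouville property directly from Hamilton's elliptic gradient estimate of Proposition \ref{Ham3}, exploiting the fact that the covariance matrix $\mathcal{C}(t)$ becomes arbitrarily large in every direction as $t\to\infty$. First I would observe that a bounded positive $\mathcal{L}$-harmonic function $u$ (with $\mathcal{L}u=0$) is, in particular, a time-independent bounded positive solution of the hypoelliptic heat equation $\mathcal{L}u=\partial_t u$. Setting $C:=\sup_x u(x)<\infty$ and $f:=u$, the smoothness coming from hypoellipticity together with the identity $\partial_t P_t u=P_t\mathcal{L}u=0$ gives $P_t f=u$ for every $t\ge0$; thus $u(x,t):=u(x)$ is exactly the solution $P_t f$ to which Proposition \ref{Ham3} applies. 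Evaluating \eqref{Ham-eq3} at a fixed point $x$ for every $t>0$ yields
$$\tfrac12\langle \mathcal{C}(t)\nabla\ln u(x),\nabla\ln u(x)\rangle\le \ln\frac{C}{u(x)},$$
where the right-hand side is a fixed finite number (since $u(x)>0$) and the left-hand side depends on $t$ only through $\mathcal{C}(t)$.

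The key step is then to show that the smallest eigenvalue $\lambda_{\min}(\mathcal{C}(t))$ tends to $+\infty$ as $t\to\infty$. For a unit vector $w$ one has $\langle\mathcal{C}(t)w,w\rangle=\int_0^t\|E^*(s)w\|_A^2\,ds$, and since $B$ is nilpotent, $E^*(s)=\exp(-sB)$ is polynomial in $s$, so $s\mapsto\|E^*(s)w\|_A^2$ is a polynomial. Because $\mathcal{C}(t)>0$ for every $t>0$ under the structure assumption \eqref{ABM}, this polynomial is not identically zero, hence $\langle\mathcal{C}(t)w,w\rangle\to\infty$ for each fixed $w\neq0$. Combining this pointwise divergence with the monotonicity $\mathcal{C}(t_2)\ge\mathcal{C}(t_1)$ for $t_2>t_1$ and a compactness argument over the unit sphere (if a minimizing sequence of unit vectors kept $\langle\mathcal{C}(t)w,w\rangle$ bounded, a subsequential limit $w_*$ would contradict $\langle\mathcal{C}(T)w_*,w_*\rangle\to\infty$) upgrades this to the uniform divergence $\lambda_{\min}(\mathcal{C}(t))\to\infty$.

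Finally, I would fix an arbitrary $x$ and let $t\to\infty$ in the displayed inequality. Since
$$\tfrac12\lambda_{\min}(\mathcal{C}(t))\,\|\nabla\ln u(x)\|^2\le \tfrac12\langle \mathcal{C}(t)\nabla\ln u(x),\nabla\ln u(x)\rangle\le \ln\frac{C}{u(x)}$$
stays bounded while $\lambda_{\min}(\mathcal{C}(t))\to\infty$, we are forced to have $\nabla\ln u(x)=0$. As $u>0$, this gives $\nabla u(x)=0$; since $x$ was arbitrary and $\mathbb{R}^{N}$ is connected, $u$ must be constant. I expect the main obstacle to be precisely the uniform lower bound $\lambda_{\min}(\mathcal{C}(t))\to\infty$: the pointwise statement is immediate from the polynomial structure forced by the nilpotency of $B$, but promoting it to a statement about the smallest eigenvalue requires the monotonicity of $t\mapsto\mathcal{C}(t)$ together with the compactness argument above.
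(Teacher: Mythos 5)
Your proof is correct, and its skeleton coincides with the paper's: both deduce the Liouville property from Hamilton's elliptic gradient estimate (Proposition \ref{Ham3}), applied to $u$ viewed as a time-independent bounded solution of the heat equation, and then let $t\to\infty$, the whole point being that $\mathcal{C}(t)$ blows up in every direction. Where you genuinely differ is in how that blow-up is established. The paper invokes the homogeneity of the operator: by Proposition 2.3 of \cite{LP94}, $\mathcal{C}(t)=\delta_{\sqrt{t}}\,\mathcal{C}(1)\,\delta_{\sqrt{t}}$ for the dilations $\delta_{\lambda}=\mathrm{diag}\left(\lambda I_{m_0},\lambda^3 I_{m_1},\cdots,\lambda^{2r+1}I_{m_r}\right)$, whence $\langle\mathcal{C}(t)v,v\rangle\ge\lambda_{\min}(\mathcal{C}(1))\|\delta_{\sqrt{t}}v\|^2\ge t\,\lambda_{\min}(\mathcal{C}(1))\|v\|^2$ for $t\ge1$; the divergence is then immediate, uniform, and quantitative (at least linear in $t$), with no compactness argument needed. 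You instead give a soft, self-contained argument: nilpotency of $B$ makes $s\mapsto\langle AE^*(s)w,E^*(s)w\rangle$ a nonnegative polynomial, positive definiteness of $\mathcal{C}(t)$ forces it to be nontrivial, so $\langle\mathcal{C}(t)w,w\rangle\to\infty$ pointwise in $w$, and then monotonicity of $t\mapsto\mathcal{C}(t)$ (which holds since $A\succeq0$) together with compactness of the unit sphere upgrades this to $\lambda_{\min}(\mathcal{C}(t))\to\infty$; this step is sound as written. Your route avoids citing the scaling identity and would survive in settings where exact homogeneity fails but positivity, monotonicity and the polynomial structure persist, at the price of being purely qualitative; the paper's route is shorter and yields a rate. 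A further small point in your favor: you justify explicitly that $P_tu=u$ (so that Proposition \ref{Ham3} applies with $f=u$ and the solution is time-independent), a step the paper leaves implicit in the phrase ``Since $u$ is independent of $t$.''
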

\begin{proof}
Assume $C$ is the upper bound of $u$, we have $u<C+1$.  Define the dilations group on $\mathbb{R}^N$ by
$$
\delta_{\lambda}=\text{diag}\left(\lambda I_{m_0},\lambda^3I_{m_1},\cdots,\lambda^{2r+1}I_{m_r}\right), \lambda>0
$$
where $I_{m_k}$ is the $m_k\times m_k$ identity matrix.  We can express $\mathcal{C}(t)$ as
\begin{equation}\label{scaling}
\mathcal{C}(t)=\delta_{\sqrt{t}}\mathcal{C}(1)\delta_{\sqrt{t}},
\end{equation}
see Proposition 2.3 in \cite{LP94}. Then we have by \eqref{Ham-eq3}

\begin{align*}
 \ln \frac{C+1}{u}&\ge \frac12\langle \mathcal{C}(1) \delta_{\sqrt{t}}\nabla \ln u,\delta_{\sqrt{t}}\nabla \ln u\rangle\\
 &\ge\frac12 \lambda_{\text{min}}\frac{1}{u^2}\left\|\delta_{\sqrt{t}}\nabla u\right\|^2,
\end{align*}
where $\lambda_{\text{min}}$ is the minimal eigenvalue of the positive matrix $\mathcal{C}(1)$.  Since $u$ is independent of $t$, letting $t\to \infty$ in the above inequality, we obtain $\nabla u\equiv0$. Hence $u$ must be a constant.
\end{proof}
\begin{remark}
The above Liouville property can \textbf{not} follow by the Li-Yau type inequality (see Proposition 4.2 in \cite{PP04}).
\end{remark}

\section*{ Acknowledgement}
The authors would like to express sincere thanks to the anonymous referee for his/her value suggestion and comment which
     greatly improve the quality of the paper.

\end{document}